\newsavebox \foobox
\newlength{\foodim}
\newtheorem{theorem}{Theorem}
\newtheorem{definition}{Definition}
\newtheorem{lemma}{Lemma}
\newtheorem{proposition}{Proposition}
\newtheorem{remark}{Remark}
\numberwithin{equation}{section}
\journal{Journal of Differential Equations}
\begin{document}
\begin{frontmatter}
\title{On the global existence and uniform-in-time bounds for three-component reaction-diffusion systems with mass control and polynomial growth}
\author{Redouane Douaifia$^{a,b}$, Salem Abdelmalek$^{c}$, Mokhtar Kirane$^{d,*}$}
\address{(a) Process Engineering Department, Faculty of Technology, University of Blida 1, Blida, Algeria
\\
(b) Water Environment and Sustainable Development Laboratory, Faculty of Technology, Blida 1 University, PO Box 270-09000, Blida, Algeria
\\
(c) Department of Mathematics, Laboratory (LAMIS), Echahid Cheikh Larbi Tebessi University, Tebessa, Algeria
\\
(d) Department of Mathematics, College of Computing and Mathematical Sciences, Khalifa University, P.O. Box: 127788, Abu
Dhabi, UAE
\\
(*) Corresponding author, email: mokhtar.kirane@ku.ac.ae
}
\begin{abstract}
We investigate a class of three-component reaction–diffusion systems subject to mass control and a newly introduced structural assumption, referred to as linear intermediate weighted sum condition. Under these hypotheses, we establish the global existence of classical solutions in arbitrary spatial dimensions and wide class of boundary conditions, even when the nonlinearities exhibit arbitrary polynomial growth. We establish also that, under slight-stronger assumptions and mixed boundary conditions, solutions admit uniform-in-time bounds. Our approach relies on the extension of $L^p$-energy polynomial functionals, together with the regularizing effect for parabolic equations. Furthermore, we demonstrate the applicability of our framework by analyzing three-species sub-skew-symmetric Lotka–Volterra systems with higher-order interactions.
\end{abstract}
\begin{keyword}
Reaction-diffusion systems; mass control; linear intermediate weighted sum; polynomial growth; global existence; $L^p$-energy polynomial functionals; three-species Lotka-Voltera model; higher order interactions.
\MSC[2010] 35A01 \sep 35Q92 \sep 35K57 \sep 35K58.
\end{keyword}
\end{frontmatter}
\tableofcontents
\section{Introduction}
This work examines the global existence of classical solutions for a
class of semilinear reaction-diffusion systems consisting of three
equations. 
\subsection{Problem setting}
Let $N\in\mathbb{N}$, and $\Omega \subset\mathbb{R}^{N}$ is a bounded domain with smooth boundary $\partial \Omega $. We consider
the reaction-diffusion system.

\begin{equation}
\left\{ 
\begin{array}{cc}
\displaystyle\frac{\partial u}{\partial t}-d_{1}\Delta u=f\left( u,v,w\right) , & \text{%
in }\Omega \times \left( 0,+\infty \right) , \\ \\
\displaystyle\frac{\partial v}{\partial t}-d_{2}\Delta v=g\left( u,v,w\right) , & \text{%
in }\Omega \times \left( 0,+\infty \right) , \\ \\
\displaystyle\frac{\partial w}{\partial t}-d_{3}\Delta w=h\left( u,v,w\right) , & \text{%
in }\Omega \times \left( 0,+\infty \right) , \\ \\
\displaystyle \frac{\partial u}{\partial \eta }\left( x,t\right) =\frac{\partial v}{%
\partial \eta }\left( x,t\right) =\frac{\partial w}{\partial \eta }\left(
x,t\right) =0, & \text{on }\partial \Omega \times \left( 0,+\infty \right) ,
\\ \\
u\left( x,0\right) =u_{0}(x)\text{, }v\left( x,0\right) =v_{0}(x)\text{, }%
w\left( x,0\right) =w_{0}(x), & \text{in }\Omega ,%
\end{array}%
\right.  \label{main_system}
\end{equation}
where $d_{1},d_{2}>0$ are diffusion coefficients, $\eta $ is the unit
outward normal vector on $\partial \Omega $, $u_{0}$, $v_{0}$, and $w_{0}$
are initial data. Under the notations \(\xi = (\xi_1, \xi_2, \xi_3)\) and \(\Lambda(\xi) = 1 + \xi_1 + \xi_2 + \xi_3\), the following assumptions are made regarding the nonlinearities:
\begin{enumerate}[label=($\mathcal{A}${{\arabic*}})]
\item \label{a1}(Quasi-positivity and Local Lipschitz) The nonlinearities $%
f,g,h:%
\mathbb{R}
^{3}\longrightarrow 
\mathbb{R}
$ are locally Lipschitz, and 
\begin{equation}
f\left( 0,\xi _{2},\xi _{3}\right) ,g\left( \xi _{1},0,\xi _{3}\right)
,h\left( \xi _{1},\xi _{2},0\right) \geq 0,\ \text{ for any } \xi \in \mathbb{R}_{+}^{3}.
\end{equation}

\item \label{a2}(Mass Control) There exists $K_1\geq 0$, such that 
\begin{equation}
f\left( \xi\right) +g\left( \xi\right) +h\left( \xi\right) \leq K_1 \Lambda(\xi),\ \text{ for any } \xi \in 
\mathbb{R}_{+}^{3}.
\end{equation}

\item \label{a3}(Linear Intermediate Weighted Sum) There exist $K_{i}\geq 0$
for $i=2,3,4$, and $\lambda _{1},\lambda _{2}>1$ (or $\lambda _{1},\lambda
_{2}<1$), such that 
\begin{equation}
\left\{ 
\begin{array}{c}
\lambda _{1}f\left( \xi\right) +g\left( \xi\right) +h\left( \xi\right) \leq
K_{2}\Lambda(\xi), \\ \\
\lambda _{2}f\left( \xi\right) +\lambda _{2}g\left(
\xi \right) +h\left( \xi\right)
\leq K_{3}\Lambda(\xi), \\ \\
\lambda _{1}\lambda _{2}f\left( \xi\right) +\lambda
_{2}g\left( \xi\right) +h\left( \xi\right) \leq K_{4}\Lambda(\xi),%
\end{array}%
\right.\label{EqCond_a3}
\end{equation}%
for any $ \xi \in \mathbb{R}_{+}^{3}.$

\item \label{a4} (Polynomial Growth) There exist $m\in \mathbb{N}$ and $M>0$%
, such that 
\begin{equation}
f\left( \xi\right) ,g\left( \xi\right) ,h\left( \xi\right) \leq M(\Lambda(\xi))^{m},\ \text{for any } \xi \in 
\mathbb{R}_{+}^{3}.
\end{equation}
\end{enumerate}

We now turn to a discussion of the structural assumptions \ref{a1}-\ref{a4}. Assumption \ref{a1}, which requires quasi-positivity together with local Lipschitz continuity, is classical in the analysis of reaction–diffusion systems. These conditions guarantee local existence of solutions and, importantly, ensure their non-negativity, i.e. the invariance of the positive octant under the flow. The latter property is particularly relevant for systems arising in biology or chemistry, since it preserves the physical meaning of the model: if the initial concentration (or density, or population) is non-negative, then the corresponding solution remains non-negative throughout its evolution. Assumption \ref{a2} serves as a unifying framework that extends several classical hypotheses commonly used in the literature, in particular:
\begin{itemize}
    \item mass conservation:
    \begin{equation}
    \label{mass_conservation}
f\left( \xi\right) +g\left( \xi\right) +h\left( \xi\right) =0,\ \text{ for any } \xi \in 
\mathbb{R}_{+}^{3}.
\end{equation}
\item mass dissipation:
\begin{equation}
\label{mass_dissipation}
f\left( \xi\right) +g\left( \xi\right) +h\left( \xi\right) \leq 0,\ \text{ for any } \xi \in 
\mathbb{R}_{+}^{3}.
\end{equation}
\end{itemize}
The assumption \ref{a3} that we have called "\textit{intermediate wheighted sum condition} (IWSC)", which is a modefied version of the so-called "intermediate sum condition (ISC)":\\
There exist an $3\times 3$ lower triangle matrix $A = (a_{ij})$ with nonnegative elements and $a_{ii}>0$ for all $i=1,2, 3$, a constant $r\geq 1$ and a constant $K>0$, such that
\begin{equation}
\left\{ 
\begin{array}{c}
a_{11}f\left( \xi\right)  \leq
K\Lambda^r(\xi), \\ \\
a_{21}f\left( \xi\right) +a _{22}g\left(
\xi \right) 
\leq K\Lambda^r(\xi), \\ \\
a_{31}f\left( \xi\right) +a_{32}g\left( \xi\right) +a_{33}h\left( \xi\right) \leq K\Lambda^r(\xi),%
\end{array}%
\right.\label{EqCond_a3_Morgan}
\end{equation}%
for any $ \xi \in \mathbb{R}_{+}^{3}.$
The essential difference between these conditions is that, in (ISC) one of the nonlinearties is assumed to be upper bounded by a polynomial of order $r$ (in particular less than $1+\frac{2}{N}$, or, for small dimension, $r$ is at most $3$), while in (IWSC), we need three wheighted sum of all nonlinearties.
Assumption \ref{a4} requires that the nonlinearities exhibit at most polynomial growth of order $m$. It is worth emphasizing that the results established in this paper impose no restriction on the value of $m$.

\subsection{State of the art and motivation}
The investigation of global existence for reaction-diffusion systems has attracted continuous attention for last decades as a fundamental question. Despite being a classical subject, it remains a source of numerous open questions and mathematical challenges, even in systems that look simple and are based on real-world uses, such reversible chemical processes. A major difficulty arises from the fact that, reaction-diffusion systems generally (consisting of two or more equations) lack a maximum principle or invariant regions, often nonlinearities have not a constant sign and this means that the local solution is a priori bounded or at least bounded in some $L^p$-space. Early contributions to this topic include the works of Alikakos \cite{Alikakos1979}, Haraux and Kirane \cite{Kirane1983}, Masuda \cite{Masuda1983}, Rothe \cite{Rothe1984}, Amann \cite{Amann1985}, Hollis, Martin, and Pierre \cite{Hollis1987}, Haraux and Youkana \cite{Youkana1988}, Morgan \cite{Morgan1989}, Kouachi \cite{Kouachi2001,Kouachi2002}, as well as Abdelmalek and Kouachi \cite{Abdelmalek2007}.

It is well known that assumptions \ref{a1} and \ref{a2} by themselves do not exclude the possibility of finite-time blow-up of solutions to system \eqref{main_system}; see \cite{Pierre2000}. This observation naturally led to an intensive study of systems supplemented by additional structural conditions, in particular \ref{a4}. Some advance were obtained by \cite{Goudon2010,Caputo2009,Canizo2014,PierreSuzuki2019,MorganTang2020}, where, under the entropy inequality or without, global existence of bounded solutions was established with specific assumptions, for instance, small space dimensions ($N =1, 2$) and at most cubic systems or strictly sub-quadratic systems in all dimensions.
The global existence of quadratic systems in higher dimensions remained unresolved until a series of contributions \cite{Souplet2018,CaputoGoudon2019,FellnerMorganTang2020} closed this gap. In the first two, the entropy inequality continued to play a role, whereas the last one succeeded in removing this requirement. Moreover, Fellner \textit{et al.}, complemented the analysis in \cite{FellnerMorganTang2021} by establishing the uniform boundedness in time of solutions.
It is worth noting that most of the previous works dealt with systems that impose homogeneous conditions (Neumann and/or Dirichlet), but some of them dealt with non-homogeneous boundary conditions as in \cite{Morgan1989,Hollis1993,Kouachi2002,Abdelmalek2007}. For more details we refer the reader to the surveys \cite{Pierre2010,Quittner2019}.

The results summarized above indicate that global existence together with uniform-in-time bounds for nonnegative classical solutions can be ensured either in the presence of quadratic nonlinearities (or super-quadratic ones subject to dimension constraints), or under structural conditions of the form \eqref{EqCond_a3_Morgan}. However, such conditions are not generally satisfied by a wide class of systems. To illustrate this limitation, we consider the following nonlinearities:
\begin{equation}
\left\{
\begin{aligned}
f(\xi) &= \xi_2^{5}-\xi_1^{6}, \\
g(\xi) &= \xi_3^{7}-B\xi_2^{5}, \\
h(\xi) &= \xi_1^{6}-C\xi_3^{7},
\end{aligned}
\right.
\label{ReacFuncs_Intro}
\end{equation}
where $B$ and $C$ are positive constants chosen sufficiently large. It is clear that the nonlinearities \eqref{ReacFuncs_Intro} do not satisfy the so-called \textit{linear intermediate sum condition} \eqref{EqCond_a3_Morgan}, as well as are not quadratic. Nevertheless, it can be verified that the nonlinearities \eqref{ReacFuncs_Intro} satisfy our proposed condition, referred to as \textit{the linear intermediate weighted sum} \eqref{EqCond_a3}, by an appropriate choice of the parameters $B$ and $C$. This observation constitutes a central motivation of the present work. Specifically, by employing the linear intermediate weighted sum condition \ref{a3}, we are able to accommodate nonlinearities with arbitrary polynomial growth. A natural question then arises: can one establish uniform-in-time bounds for solutions of \eqref{main_system} subject to mixed boundary conditions under assumptions \ref{a1}-\ref{a4}? We give a positive answer to this issue in this work.

\subsection{Main results and key ideas}

During this section, we will use the following notation: $A_{ij}=\frac{d_{i}+d_{j}}{2\sqrt{d_{i}d_{j}}}$ for $i,j=1,2,3$. The main contribution of this paper is encapsulated in the following
theorems.

\begin{theorem}
\label{main_Theor} Assume the conditions \ref{a1}, \ref{a2}, \ref{a3}, \ref{a4}, for some $\lambda _{1}\geq $ $\theta ^{2(p-1)}$, $\lambda _{2}\geq $ $\sigma
^{2(p-1)}$, with
\begin{equation}
\left\{ 
\begin{array}{c}
\theta >A_{12}, \\ 
\text{ and }\\
\left( \theta ^{2}-\left( A_{12}\right) ^{2}\right) \left( \theta ^{2}\sigma
^{2}-\left( A_{13}\right) ^{2}\right) >\left( A_{23}\theta ^{2}-A_{12}A_{13}%
\right) ^{2}, \\ 
\end{array}%
\right.   \label{Teta_Sigma_Conds}
\end{equation}
such that
\begin{equation}
p=\min \left\{ l\in \mathbb{N}\ :\ l>\frac{m(N+2)}{2}\right\} ,  \label{Cond_p}
\end{equation}%
and $u_{0},v_{0},w_{0}\in L^{\infty }\left( \Omega ;\mathbb{R}_{+}\right) $.
Then the system (\ref{main_system}) possesses a unique nonnegative global
classical solution. Moreover, if $K_i= 0$ for all $i=1,\dots,4$, then the solution is bounded uniformly in time, i.e. 
\begin{equation}\label{uvw_LinftyUniformUpperBound}
		\sup_{t\geq 0}\|u(.,t)\|_{L^{\infty}(\Omega)},\quad\sup_{t\geq 0}\|v(.,t)\|_{L^{\infty}(\Omega)},\quad \sup_{t\geq 0}\|w(.,t)\|_{L^{\infty}(\Omega)} < +\infty.
\end{equation}
\end{theorem}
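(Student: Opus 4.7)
By \ref{a1} and Amann's theory, \eqref{main_system} admits a unique maximal nonnegative classical solution $(u,v,w)$ on $[0,T_{\max})$, positivity following from quasi-positivity. To conclude $T_{\max}=+\infty$ it suffices to bound $(u,v,w)$ in $L^{\infty}(Q_{T})$ for each $T<T_{\max}$, where $Q_{T}=\Omega\times[0,T]$. My plan is first to derive an $L^{p}$ bound with $p$ as in \eqref{Cond_p} through an $L^{p}$-energy polynomial functional, and then to upgrade to $L^{\infty}$ via parabolic smoothing. For the uniform-in-time statement, the same $L^{p}$ estimate is rendered dissipative when all $K_{i}$ vanish.

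\textbf{The $L^{p}$-energy polynomial functional and the core estimate.} I would introduce
\[
\mathcal{L}_{p}(t)=\int_{\Omega}H_{p}\bigl(1+u(x,t),\,1+v(x,t),\,1+w(x,t)\bigr)\,dx,
\]
where $H_{p}$ is a degree-$p$ polynomial in three nonnegative variables with positive coefficients built from powers of $\theta$ and $\sigma$, arranged so that $\mathcal{L}_{p}$ is equivalent to $\|(1+u,1+v,1+w)\|_{L^{p}(\Omega)}^{p}$. Differentiating along the flow and integrating by parts under the homogeneous Neumann BC, $\frac{d}{dt}\mathcal{L}_{p}$ splits as a diffusive minus a reactive contribution. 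The diffusive part is the spatial integral of a quadratic form in $(\nabla u,\nabla v,\nabla w)$; after absorbing the cross terms via Young's inequality (which is exactly how the constants $A_{ij}$ enter), its pointwise coefficient matrix reduces, up to positive multipliers, to a symmetric $3\times 3$ matrix whose leading principal minors are $1$, $\theta^{2}-A_{12}^{2}$, and $(\theta^{2}-A_{12}^{2})(\theta^{2}\sigma^{2}-A_{13}^{2})-(A_{23}\theta^{2}-A_{12}A_{13})^{2}$. By Sylvester's criterion, \eqref{Teta_Sigma_Conds} is exactly positive-definiteness, so this part is nonpositive. The reactive part, after expanding the multinomial, is a finite sum whose leading coefficients are weighted combinations of $f,g,h$: the four weighted-sum bounds of \ref{a2}--\ref{a3} correspond to the four extremal corners $(1,1,1)$, $(\lambda_{1},1,1)$, $(\lambda_{2},\lambda_{2},1)$ and $(\lambda_{1}\lambda_{2},\lambda_{2},1)$, and the hypotheses $\lambda_{1}\geq\theta^{2(p-1)}$, $\lambda_{2}\geq\sigma^{2(p-1)}$ are tailored so that every coefficient arising in the expansion is dominated by one of these corners, hence by a multiple of $\Lambda$. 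Sub-leading terms are absorbed via Young's inequality, giving $\frac{d}{dt}\mathcal{L}_{p}\leq C_{p}(\mathcal{L}_{p}+1)$, so $\sup_{[0,T]}\mathcal{L}_{p}(t)<\infty$ by Gronwall.

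\textbf{Bootstrap, uniform bounds, and main obstacle.} Since $p>m(N+2)/2$ by \eqref{Cond_p}, assumption \ref{a4} places $f,g,h$ in $L^{p/m}(Q_{T})$ with $p/m>(N+2)/2$; applying the Duhamel formula component-wise and the standard parabolic $L^{q}$--$L^{\infty}$ smoothing effect of the heat semigroup lifts the estimate to $L^{\infty}(Q_{T})$, excluding blow-up and hence yielding $T_{\max}=+\infty$. When $K_{1}=\cdots=K_{4}=0$, every reaction weighted sum is nonpositive, so the reactive part drops out of the inequality; a Poincar\'e-type correction on the conserved/dissipated mass $\int_{\Omega}(u+v+w)$ then produces $\frac{d}{dt}\mathcal{L}_{p}+c\,\mathcal{L}_{p}\leq C$, a time-independent $L^{p}$ bound, and running the Duhamel bootstrap on moving windows $[t,t+1]$ delivers \eqref{uvw_LinftyUniformUpperBound}. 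I expect the hardest single step to be the diffusion-matrix reduction: one must verify, \emph{uniformly} over all multi-indices $(i,j,k)$ with $i+j+k=p-2$ arising from twice-differentiating $H_{p}$, that the cross-gradient coefficients admit a common absorbing bound whose Sylvester minors coincide, up to positive multipliers independent of $(i,j,k)$, with those in \eqref{Teta_Sigma_Conds}, so that a single algebraic check on $(\theta,\sigma)$ simultaneously tames every monomial contribution.
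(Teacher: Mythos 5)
Your proposal is correct and follows essentially the same route as the paper: the $L^p$-energy polynomial functional whose coefficients are powers of $\theta,\sigma$, nonpositivity of the diffusive term via Sylvester's criterion under \eqref{Teta_Sigma_Conds} (uniformly in the multi-indices, which is exactly what the exponents $\theta^{i^2}\sigma^{j^2}$ are designed to achieve), domination of every reactive weighted sum by the four corner conditions \ref{a2}--\ref{a3} via interpolation, Gronwall, and the $L^{p/m}\to L^\infty$ parabolic bootstrap, with the time-windowed smoothing argument for the uniform-in-time bound. The only superfluous element is the Poincar\'e-type correction in the last step: once $K_1=\dots=K_4=0$ forces $\frac{d}{dt}\mathcal{L}_p\le 0$, monotonicity alone already gives the time-independent $L^p$ bound, as in the paper.
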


We have also the following result.
\begin{theorem}
\label{main_Theor2} Assume the conditions \ref{a1}, \ref{a2}, \ref{a3} for
some $\lambda _{1}\leq $ $\theta ^{-2p}$, $\lambda _{2}\leq $ $\sigma ^{-2p}$%
, and \ref{a4} where the assumptions (\ref{Teta_Sigma_Conds}), (\ref{Cond_p}) are fulfilled, and $u_{0},v_{0},w_{0}\in
L^{\infty }\left( \Omega ;\mathbb{R}_{+}\right) $. Then the system (\ref%
{main_system}) possesses a unique nonnegative global classical solution. Moreover, if $K_i= 0$ for all $i=1,\dots,4$, then the solution is bounded uniformly in time, as \eqref{uvw_LinftyUniformUpperBound}.
\end{theorem}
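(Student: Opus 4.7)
The plan is to treat Theorem~\ref{main_Theor2} as the dual counterpart of Theorem~\ref{main_Theor}, using an $L^p$-energy polynomial functional tailored to the regime $\lambda_1,\lambda_2<1$, and then recycling the regularising-effect bootstrap. Local existence and nonnegativity follow from \ref{a1} and standard parabolic theory applied to the $L^\infty$ initial data, yielding a classical solution on a maximal interval $[0,T_{\max})$ with the usual blow-up alternative. Assumption \ref{a2}, integrated over $\Omega$ against the homogeneous Neumann condition, yields $L^1$-in-space bounds on $u+v+w$, uniform on any finite time interval, and uniform in $t$ when $K_1=0$.

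The core step is the construction of the energy functional. In Theorem~\ref{main_Theor} one naturally works with something of the form $L(t)=\int_\Omega(u+\theta v+\theta\sigma w)^p\,dx$, in which heavier weight sits on $u$; in the present regime the $\lambda_i$ are small, so the natural dual choice is
\[
L(t)=\int_\Omega(\theta\sigma\, u+\sigma\, v+w)^p\,dx,
\]
where the heavier weight is placed on $u$ through the product $\theta\sigma$ (and lightest on $w$). Differentiating and integrating the diffusion term by parts produces a ``good'' gradient contribution of the form
\[
-p(p-1)\int_\Omega (\theta\sigma u+\sigma v+w)^{p-2}\,Q(\nabla u,\nabla v,\nabla w)\,dx,
\]
where $Q$ is a symmetric bilinear form whose coefficients are built from $d_1,d_2,d_3,\theta,\sigma$; the condition \eqref{Teta_Sigma_Conds} is exactly the Sylvester-type criterion ensuring that $Q$ is positive definite, the constants $A_{ij}$ encoding the loss $(d_i+d_j)/(2\sqrt{d_id_j})\ge 1$ relative to equal diffusivities. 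The reaction contribution
\[
p\int_\Omega(\theta\sigma u+\sigma v+w)^{p-1}\bigl(\theta\sigma\, f+\sigma\, g+h\bigr)\,dx
\]
is expanded via the multinomial theorem, and each resulting monomial is matched against one of the three weighted sums in \ref{a3} after multiplying through by appropriate powers of $\lambda_1^{-1},\lambda_2^{-1}$; this rearrangement, together with the polynomial growth bound \ref{a4} and a Young inequality absorbed into $Q$, produces the crucial differential inequality
\[
L'(t)\le c_1\,L(t)+c_2,
\]
with $c_2=0$ when all $K_i$ vanish. The shift between the exponent condition $\lambda_1\ge\theta^{2(p-1)}$ in Theorem~\ref{main_Theor} and $\lambda_1\le\theta^{-2p}$ here is the algebraic signature of this reweighting: the binding monomial in the expansion is $w^{p-1}$ rather than $u^{p-1}$, and division by $\lambda_1\lambda_2$ shifts the exponent by one.

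Integrating the inequality gives an a priori $L^p(\Omega)$ bound on $(u,v,w)$ on any finite subinterval of $[0,T_{\max})$, uniform in $t$ when $K_i\equiv 0$. Since $p$ was chosen by \eqref{Cond_p} so that $p>m(N+2)/2$, the polynomial growth \ref{a4} yields $f,g,h\in L^{p/m}(\Omega\times(0,T))$ with $p/m>(N+2)/2$; applying the regularising effect / maximal regularity for the heat semigroup with homogeneous Neumann data componentwise then upgrades the $L^p$ bound to an $L^\infty$ bound, excluding the blow-up alternative and delivering $T_{\max}=+\infty$. In the conservative case $K_i\equiv 0$ a moving-window argument on intervals of fixed length preserves uniformity in $t$ and yields \eqref{uvw_LinftyUniformUpperBound}.

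The step I expect to be the main obstacle is the construction in Step~2: one must verify simultaneously that (i) the gradient quadratic form $Q$ attached to the dual polynomial is positive definite under the \emph{same} geometric hypothesis \eqref{Teta_Sigma_Conds}, and (ii) the reaction combination $\theta\sigma f+\sigma g+h$, together with its expanded cross terms, reduces to the three weighted inequalities of \ref{a3} after the shift $\lambda_1\mapsto\lambda_1^{-1}$, $\lambda_2\mapsto\lambda_2^{-1}$. Getting the exponent shift $2(p-1)\leadsto -2p$ to match precisely is a delicate bookkeeping point and is the key algebraic verification separating the proof of Theorem~\ref{main_Theor2} from a direct reduction to Theorem~\ref{main_Theor}.
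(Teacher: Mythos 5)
Your overall architecture (local existence, an $L^p$-energy functional, the choice of $p$ via \eqref{Cond_p}, the $L^{p/m}$ bound on $f,g,h$, heat-kernel regularisation to kill the blow-up alternative, and a moving-window cut-off for the uniform-in-time bound) matches the paper. But the core step — the energy functional itself — is wrong, and the error sits exactly where you flagged "the main obstacle." You propose $L(t)=\int_\Omega(\theta\sigma u+\sigma v+w)^p\,dx$, a pure $p$-th power of a linear combination. For such a functional the gradient quadratic form produced by integration by parts has symmetrised matrix with entries $a_ia_j\frac{d_i+d_j}{2}$ and diagonal $a_i^2 d_i$ (where $(a_1,a_2,a_3)=(\theta\sigma,\sigma,1)$); its $2\times2$ leading minor equals $a_1^2a_2^2\bigl(d_1d_2-\bigl(\tfrac{d_1+d_2}{2}\bigr)^2\bigr)=-\tfrac{1}{4}a_1^2a_2^2(d_1-d_2)^2\le 0$. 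No choice of $\theta,\sigma$ can make this positive definite unless $d_1=d_2=d_3$, and condition \eqref{Teta_Sigma_Conds} is \emph{not} the Sylvester criterion for this form. The paper avoids this by taking the Kouachi-type homogeneous polynomial
\begin{equation*}
\mathcal{H}_{p}^{\ast}(S)=\sum_{j=0}^{p}\sum_{i=0}^{j}C_{p}^{j}C_{j}^{i}\,\theta^{(p-i)^{2}-i}\sigma^{(p-j)^{2}-j}\,u^{i}v^{j-i}w^{p-j},
\end{equation*}
whose coefficients depend \emph{quadratically} on the exponents. It is precisely this quadratic dependence that injects an extra factor $\theta^{2}$ (resp.\ $\sigma^2$, $\theta^2\sigma^2$) into each off-diagonal comparison — e.g.\ $(i+2)^2+i^2-2(i+1)^2=2$ — so that the minors become $d_1d_2\theta^2-\bigl(\tfrac{d_1+d_2}{2}\bigr)^2>0$ iff $\theta>A_{12}$, which is what \eqref{Teta_Sigma_Conds} actually encodes.

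A second, related gap: with your functional the reaction term collapses to the single combination $\theta\sigma f+\sigma g+h$, whereas the paper's functional generates, for each monomial indexed by $0\le i\le j\le p-1$, the combination $\theta^{2(i-p)}\sigma^{2(j-p)}f+\sigma^{2(j-p)}g+h$. Controlling this whole two-parameter family is exactly why the hypotheses are stated as three weighted sums in \ref{a3} together with \ref{a2}, and why the interpolation Lemma \ref{LIWS_lemma} is needed: the extreme coefficients $\theta^{-2p}\sigma^{-2p}$ (at $i=j=0$) and $1$ (at $i=j=p$, morally) are pinned by \ref{a3} with $\lambda_1\le\theta^{-2p}$, $\lambda_2\le\sigma^{-2p}$ and by \ref{a2}, and the lemma fills in all intermediate values. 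Your "multinomial expansion plus Young's inequality absorbed into $Q$" does not reproduce this mechanism, and since your $Q$ is not positive definite there is nothing to absorb into. The exponent bookkeeping you worry about ($2(p-1)$ versus $-2p$) is a symptom of the same issue: it comes from whether the coefficient exponent is $(p-i)^2$ or $i^2$ in the Kouachi polynomial, not from "division by $\lambda_1\lambda_2$" in a binomial expansion.
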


\begin{remark}[Generalization]
\label{Gen_Theorems}
We can consider the following system with general boundary conditions.
\begin{equation}
\left\{ 
\begin{array}{cc}
\displaystyle\frac{\partial u}{\partial t}-d_{1}\Delta u=f\left( u,v,w\right) , & \text{%
in }\Omega \times \left( 0,+\infty \right) , \\ \\
\displaystyle\frac{\partial v}{\partial t}-d_{2}\Delta v=g\left( u,v,w\right) , & \text{%
in }\Omega \times \left( 0,+\infty \right) , \\ \\
\displaystyle\frac{\partial w}{\partial t}-d_{3}\Delta w=h\left( u,v,w\right) , & \text{%
in }\Omega \times \left( 0,+\infty \right) , \\ \\
\begin{array}{c}
\displaystyle\lambda _{1}u(x,t)+\left( 1-\lambda _{1}\right) \frac{\partial u}{\partial
\eta }\left( x,t\right) =\beta _{1}, \\ \\
\displaystyle\lambda _{2}v(x,t)+\left( 1-\lambda _{2}\right) \frac{\partial v}{\partial
\eta }\left( x,t\right) =\beta _{2}, \\ \\
\displaystyle\lambda _{3}w(x,t)+\left( 1-\lambda _{3}\right) \frac{\partial w}{\partial
\eta }\left( x,t\right) =\beta _{3},%
\end{array}
& \text{on }\partial \Omega \times \left( 0,+\infty \right), \\ \\
u\left( x,0\right) =u_{0}(x)\text{, }v\left( x,0\right) =v_{0}(x)\text{, }%
w\left( x,0\right) =w_{0}(x), & \text{in }\Omega.
\end{array}%
\right.  \label{main_system_GBCs}
\end{equation}
The same result (for global existence) as in Theorem \ref{main_Theor} and Theorem \ref{main_Theor2} can be obtained for homogeneous
Dirichlet BCs, nonhomogeneous Robin BCs, and a mixture of homogeneous
Dirichlet with nonhomogeneous Robin boundary conditions, i.e., the boundary conditions are
defined as follows:

\begin{itemize}
\item For homogeneous Neumann boundary conditions, we use%
\begin{equation}
\lambda _{i}=0\quad \text{and} \quad\beta _{i}\equiv 0,\quad \text{for}\quad i=1,2,3.  
\label{HNBCs_EX3}
\end{equation}

\item For homogeneous Dirichlet boundary conditions, we use 
\begin{equation}
1-\lambda _{i} =0 \quad \text{and} \quad \beta _{i}\equiv 0,\quad \text{for} \quad i=1,2,3. 
\label{HDBCs_EX3}
\end{equation}

\item For nonhomogeneous Robin boundary conditions, we use 
\begin{equation}
\lambda _{i}\in \left( 0,1\right) \text{, and }\beta _{i}\in C^{1}\left(
\partial \Omega ;%
\mathbb{R}
_{+}\right),\quad \text{for}\quad i=1,2,3.
\label{NHRBCs_EX3}
\end{equation}

\item For a mixture of homogeneous Dirichlet with nonhomogeneous Robin
boundary conditions, we use 
\begin{equation}
\left\{
\begin{array}{ll}
\exists\, i\in\{1,2,3\}: & \lambda_{i}=1,\quad \text{and} \quad \beta_{i}\equiv 0, \\ \\
\text{when} \quad j\neq i,\quad\text{for}\quad j=1,2,3: 
& \lambda_{j}\in(0,1),\quad \text{and}\quad \beta_{j}\in C^{1}\!\left(\partial\Omega;\mathbb{R}_{+}\right).
\end{array}
\right.
\label{MixNHRBCsHDBCs_EX3}
\end{equation}
\end{itemize}
\end{remark}
We now state the results that generalize Theorem \ref{main_Theor} and Theorem \ref{main_Theor2}.
\begin{theorem}
\label{main_Theor3GBCs}
Assume the conditions \ref{a1}, \ref{a2}, \ref{a3} for some $\lambda _{1}\geq $ $\theta ^{2(p-1)}$, $\lambda _{2}\geq $ $\sigma
^{2(p-1)}$, and \ref{a4}  where the assumptions (\ref{Teta_Sigma_Conds}) and (\ref{Cond_p}) are fulfilled.
Then, for any $u_{0},v_{0},w_{0}\in L^{\infty }\left( \Omega ;\mathbb{R}_{+}\right) $ the system (\ref{main_system_GBCs}) with one of the boundary conditions $\left( \text{\ref{HNBCs_EX3}}\right) $-$\left( \text{\ref{MixNHRBCsHDBCs_EX3}}\right)$ imposed, possesses a unique nonnegative global
classical solution. Moreover, if $K_i= 0$ for all $i=1,\dots,4$, and in \eqref{NHRBCs_EX3} or \eqref{MixNHRBCsHDBCs_EX3}, we have $\beta_i\equiv 0$ for $i=1,2,3$, then the solution is bounded uniformly in time, i.e. 
\begin{equation}\label{uvw_LinftyUniformUpperBound_GBCs}
		\sup_{t\geq 0}\|u(.,t)\|_{L^{\infty}(\Omega)},\quad\sup_{t\geq 0}\|v(.,t)\|_{L^{\infty}(\Omega)},\quad \sup_{t\geq 0}\|w(.,t)\|_{L^{\infty}(\Omega)} < +\infty.
\end{equation}
\end{theorem}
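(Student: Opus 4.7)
The plan is to mimic the arguments used in the proofs of Theorems \ref{main_Theor} and \ref{main_Theor2}, case-by-case on the four boundary conditions (\ref{HNBCs_EX3})--(\ref{MixNHRBCsHDBCs_EX3}). The homogeneous Neumann case is Theorem \ref{main_Theor} itself, so nothing new is required there. For the remaining three cases, local existence, uniqueness, and nonnegativity follow from \ref{a1} by the classical theory for parabolic systems with general boundary conditions, as in \cite{Amann1985}; the only nontrivial task is to promote local-in-time $L^{\infty}$ bounds to global ones (and, when $K_i=0$ and $\beta_i\equiv 0$, to uniform-in-time bounds).

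First I would treat the nonhomogeneous Robin and mixed nonhomogeneous cases by a standard lift. Pick smooth nonnegative functions $U_1,U_2,U_3\in C^{2}(\overline{\Omega})$ solving the stationary boundary value problems $-d_i\Delta U_i=0$ in $\Omega$ with $\lambda_i U_i+(1-\lambda_i)\partial_\eta U_i=\beta_i$ on $\partial\Omega$ (replacing the Robin condition by Dirichlet $U_i=0$ on the components where $\lambda_i=1$, $\beta_i=0$). By elliptic theory $U_1,U_2,U_3\in L^\infty(\Omega)$. Set $\tilde u=u-U_1$, $\tilde v=v-U_2$, $\tilde w=w-U_3$; then $(\tilde u,\tilde v,\tilde w)$ solves the analogous system with \emph{homogeneous} boundary data and modified reaction terms $\tilde f(\tilde\xi)=f(\tilde\xi+U)$, etc. Because $U_i$ are bounded, these modified nonlinearities still satisfy quasi-positivity on the translated octant, together with \ref{a2}, \ref{a3}, \ref{a4} up to harmless additive constants absorbed into the $K_i$ and $M$ (with the same polynomial degree $m$ and the same $\lambda_1,\lambda_2,\theta,\sigma,p$). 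Consequently the problem reduces to the homogeneous Dirichlet, homogeneous Robin, and mixed homogeneous cases.

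Next I would reopen the $L^p$-energy polynomial functional constructed in the proof of Theorem \ref{main_Theor} (and Theorem \ref{main_Theor2}) — a polynomial of the form $\mathcal{L}_p(t)=\int_\Omega \Phi(u,v,w)^{p}\,dx$ built from the quadratic form governed by the constants $A_{ij}$, $\theta$, $\sigma$ — and redo the time differentiation under the new boundary conditions. The only change is in the integration-by-parts step applied to $-d_i\Delta u$, etc., which now produces a boundary integral. For homogeneous Dirichlet, $\Phi(u,v,w)\equiv 0$ on $\partial\Omega$ and every such integral vanishes. For homogeneous Robin with $\lambda_i\in(0,1)$, one has $\partial_\eta u=-\tfrac{\lambda_i}{1-\lambda_i}u\leq 0$ on $\partial\Omega$ (and similarly for $v,w$), so the boundary contribution carries the \emph{favorable} sign and can be discarded; the same is true in the mixed case. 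Hence the key differential inequality proved for Neumann boundary conditions holds \emph{a fortiori}, and from it the $L^p$ a priori estimate, together with the parabolic smoothing effect under the corresponding heat semigroup (the Dirichlet, Robin, or mixed heat kernel all enjoy the same Gaussian $L^q\!\to\!L^\infty$ bounds), gives $L^{\infty}$ control on any finite interval, hence global existence.

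Finally, for the uniform-in-time bound under $K_i=0$ ($i=1,\dots,4$) and $\beta_i\equiv 0$, no lift is needed and the homogeneous dissipative structure combined with a Poincaré inequality (available as soon as Dirichlet boundary portions are present, or obtained from the Robin trace term for pure Robin conditions) upgrades the differential inequality for $\mathcal{L}_p$ to the absorbing form $\mathcal{L}_p'(t)\leq -c\,\mathcal{L}_p(t)+C$; iterating the smoothing effect over unit time windows yields \eqref{uvw_LinftyUniformUpperBound_GBCs}. The main obstacle I anticipate is the bookkeeping: verifying that the boundary contribution in the $L^p$ energy identity indeed decomposes with the correct sign under the \emph{mixed} boundary condition (\ref{MixNHRBCsHDBCs_EX3}), where the Dirichlet component of the boundary produces vanishing traces while the Robin component produces dissipative ones, and checking that the lift does not break the weighted sum inequalities \eqref{EqCond_a3} beyond absorbable constants.
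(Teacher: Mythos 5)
Your handling of the homogeneous cases agrees in substance with the paper: after Green's formula the extra boundary integral $\widehat{\mathcal{I}}=\int_{\partial\Omega}\bigl(d_{1}\partial_{u}\widetilde{\mathcal{H}}_{p}\,\partial_{\eta}u+d_{2}\partial_{\varphi}\widetilde{\mathcal{H}}_{p}\,\partial_{\eta}\varphi+d_{3}\partial_{\psi}\widetilde{\mathcal{H}}_{p}\,\partial_{\eta}\psi\bigr)\,dx$ vanishes for homogeneous Dirichlet and is nonpositive for homogeneous Robin, since $\partial_{u}\widetilde{\mathcal{H}}_{p},\partial_{\varphi}\widetilde{\mathcal{H}}_{p},\partial_{\psi}\widetilde{\mathcal{H}}_{p}\geq 0$ on the positive octant and $\partial_{\eta}u=-\tfrac{\lambda_{1}}{1-\lambda_{1}}u\leq 0$. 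For the nonhomogeneous Robin and mixed cases, however, the paper does something different from you: it keeps the original nonnegative unknowns, retains $\widehat{\mathcal{I}}$ in the energy identity, and shows directly (following \cite[Page 7]{Kouachi2002}) that $\widehat{\mathcal{I}}\leq C$ for some constant $C>0$, so that the differential inequality for $\widetilde{\mathcal{L}}$ survives with an extra additive constant. Your proposal instead reduces to homogeneous data by the lift $\tilde u=u-U_{1}$, $\tilde v=v-U_{2}$, $\tilde w=w-U_{3}$, and this step has a genuine gap.

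The lift is incompatible with the $L^{p}$-polynomial machinery. First, the translated nonlinearity $\tilde f(\tilde\xi)=f(\tilde\xi+U)$ is not quasi-positive: $\tilde f(0,\tilde\xi_{2},\tilde\xi_{3})=f(U_{1}(x),\,\cdot\,,\,\cdot\,)$ need not be nonnegative, because \ref{a1} controls $f$ only when its first argument vanishes; hence the positive octant in the new variables is not invariant, and $\tilde u,\tilde v,\tilde w$ can be negative (indeed $\tilde u\geq -\|U_{1}\|_{\infty}$ is all one gets from $u\geq 0$). Second --- and this is where the argument actually fails --- every positivity step in Proposition \ref{Prop1} uses the nonnegativity of the unknowns: $\mathcal{I}\leq 0$ because the positive-definite quadratic forms are weighted by the monomials $u^{i}\varphi^{j-i}\psi^{(p-2)-j}\geq 0$; the estimate of $\mathcal{J}$ applies Lemma \ref{LIWS_lemma} after multiplying the weighted-sum inequalities by nonnegative monomials; and \ref{a2}--\ref{a4} are only assumed on $\mathbb{R}_{+}^{3}$. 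None of this goes through for sign-changing $\tilde u,\tilde v,\tilde w$, and a further constant shift to restore nonnegativity destroys the homogeneity of the boundary condition you just gained. So for \eqref{NHRBCs_EX3} and \eqref{MixNHRBCsHDBCs_EX3} you should drop the lift and estimate $\widehat{\mathcal{I}}$ directly in the original variables. A smaller point: for the uniform-in-time bound the paper does not use a Poincar\'e inequality (which in any case is unavailable for the homogeneous Neumann case the theorem also covers); it shows $\widetilde{\mathcal{L}}'\leq 0$ when $K_{i}=0$ and $\beta_{i}\equiv 0$, and then applies the cut-off-plus-parabolic-smoothing argument of Subsection \ref{Thrm1_UBt} on unit time windows.
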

Furthermore, we have the following result.
\begin{theorem}
\label{main_Theor4GBCs}
Assume the conditions \ref{a1}, \ref{a2}, \ref{a3} for
some $\lambda _{1}\leq $ $\theta ^{-2p}$, $\lambda _{2}\leq $ $\sigma ^{-2p}$%
, and \ref{a4} where the assumptions (\ref{Teta_Sigma_Conds}) and (\ref{Cond_p}) are fulfilled.
Then, for any $u_{0},v_{0},w_{0}\in L^{\infty }\left( \Omega ;\mathbb{R}_{+}\right) $ the system (\ref{main_system_GBCs}) with one of the boundary conditions $\left( \text{\ref{HNBCs_EX3}}\right) $-$\left( \text{\ref{MixNHRBCsHDBCs_EX3}}\right)$ imposed, possesses a unique nonnegative global
classical solution. Moreover, if $K_i= 0$ for all $i=1,\dots,4$, and in \eqref{NHRBCs_EX3} or \eqref{MixNHRBCsHDBCs_EX3}, we have $\beta_i\equiv 0$ for $i=1,2,3$, then the solution is bounded uniformly in time.
\end{theorem}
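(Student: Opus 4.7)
The plan is to recycle the $L^p$-energy polynomial functional that underlies the proof of Theorem~\ref{main_Theor2} and to re-examine only what changes in the presence of the general boundary conditions \eqref{HNBCs_EX3}--\eqref{MixNHRBCsHDBCs_EX3}. Since the constraint on $\lambda_1,\lambda_2$ is the same as in Theorem~\ref{main_Theor2}, the algebraic content of the functional, the role of \eqref{Teta_Sigma_Conds} in making its diffusive part pointwise coercive, and the use of \eqref{Cond_p} to absorb the polynomial-growth reaction all remain unchanged. The proof then rests on three ingredients: local existence and non-negativity from \ref{a1}, a suitable a priori $L^p$-bound for every $p$, and a parabolic bootstrap from $L^p$ to $L^\infty$.

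First I would use \ref{a1} together with standard parabolic theory for the elliptic operators associated with each admissible boundary condition to obtain a unique, classical, non-negative solution on a maximal interval $[0,T_{\max})$; it then suffices to rule out blow-up by establishing an a priori $L^\infty$-bound on $[0,T]$ for every $T<T_{\max}$. For the $L^p$-bound, I would differentiate along the flow the same polynomial Lyapunov functional as in Theorem~\ref{main_Theor2}, whose weighted structure reflects the inequalities $\lambda_1\leq\theta^{-2p}$ and $\lambda_2\leq\sigma^{-2p}$. The coercivity of the cross-diffusion quadratic form under \eqref{Teta_Sigma_Conds} absorbs the gradient contributions, while \ref{a3} combined with \ref{a4} and the choice \eqref{Cond_p} reduces the reactive part to an at-most-linear function of the functional itself, hence a Gronwall-type inequality that yields $L^p$-bounds for every $p\geq 1$. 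A standard bootstrap using the ultracontractive estimates of the semigroup generated by $-d_i\Delta$ with the appropriate boundary operator then upgrades these bounds to $L^\infty$.

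The main obstacle is the boundary contribution produced by integration by parts in $\tfrac{d}{dt}\mathcal{L}_p(t)$, which is what distinguishes this theorem from Theorem~\ref{main_Theor2}. For \eqref{HNBCs_EX3} and \eqref{HDBCs_EX3} the boundary integrals either vanish or have a sign compatible with dissipation and no adjustment is needed. For the nonhomogeneous Robin case \eqref{NHRBCs_EX3}, I would first subtract smooth stationary liftings $\tilde u,\tilde v,\tilde w$ solving the linear Robin problems with data $\beta_i\in C^1(\partial\Omega;\mathbb{R}_+)$; the shifted unknowns then satisfy homogeneous Robin conditions and carry only bounded interior sources that can be absorbed in the reactive part without altering the differential-inequality structure. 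The mixed case \eqref{MixNHRBCsHDBCs_EX3} is treated component-by-component by combining the two previous arguments, being careful that the coercivity of the cross-diffusion form is unchanged since it is a pointwise algebraic fact independent of the boundary operator.

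Finally, for the uniform-in-time bound under $K_i=0$ and $\beta_i\equiv 0$, both the affine forcing from \ref{a2}--\ref{a3} and the boundary source terms vanish, so the differential inequality for $\mathcal{L}_p(t)$ tightens to a strictly dissipative one of the form $\mathcal{L}_p'(t)+c\,\mathcal{L}_p(t)\leq 0$ with $c>0$ obtained either from Poincaré's inequality (whenever a Dirichlet component is present) or from subtracting the conserved spatial mean in the pure-Neumann case. This yields $\sup_{t\geq 0}\mathcal{L}_p(t)<+\infty$, and the very same $L^p$-to-$L^\infty$ bootstrap delivers the desired uniform-in-time $L^\infty$-bound.
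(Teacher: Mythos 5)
Your overall architecture --- reusing the $L^p$-energy polynomial functional of Theorem \ref{main_Theor2}, coercivity of the cross-diffusion quadratic form from \eqref{Teta_Sigma_Conds}, reduction of the reactive part via \ref{a3} and the weighted-sum lemma, then an $L^p$-to-$L^\infty$ bootstrap --- is the paper's. The divergence, and the gap, is in how you treat the nonhomogeneous Robin data \eqref{NHRBCs_EX3} and \eqref{MixNHRBCsHDBCs_EX3}. The paper keeps the original non-negative unknowns, writes $\widetilde{\mathcal{L}}_1'(t)=\mathcal{I}^{\ast}+\widehat{\mathcal{I}}^{\ast}+\mathcal{J}^{\ast}$ with an explicit boundary integral $\widehat{\mathcal{I}}^{\ast}$, and bounds $\widehat{\mathcal{I}}^{\ast}\leq\widetilde{C}$ directly (following Kouachi's argument) by substituting the boundary condition into the normal derivatives. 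You instead subtract stationary liftings $\tilde u,\tilde v,\tilde w$ so as to homogenize the Robin condition. But the shifted unknowns $u-\tilde u$, $v-\tilde v$, $w-\tilde w$ are in general no longer non-negative, and non-negativity of all three components is load-bearing everywhere in this method: (i) the conclusion $\mathcal{I}^{\ast}\leq 0$ requires the monomial weights $u^{i}\varphi^{j-i}\psi^{(p-2)-j}$ multiplying the positive-definite forms $\mathcal{V}^{T}(\mathcal{B}_{ij}\otimes I_N)\mathcal{V}$ to be non-negative; (ii) the estimate of $\mathcal{J}^{\ast}$ multiplies the inequalities of \ref{a2}--\ref{a3}, which are only assumed on $\mathbb{R}_{+}^{3}$ and only in the original variables, by the monomials $u^{i}\varphi^{j-i}\psi^{(p-1)-j}$, and the inequality direction is preserved only if these are non-negative; (iii) the lower bound $\Vert u+v+w\Vert_{L^{p}}^{p}\leq C\,\mathcal{L}_{1}(t)$ also needs it. So the assertion that the lifting produces "only bounded interior sources that can be absorbed without altering the differential-inequality structure" does not hold as stated; you would have to either prove the shifted unknowns stay non-negative (false in general) or, as the paper does, estimate the boundary integral in the original variables.

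A second, more minor, misstep is in the uniform-in-time part: the strict dissipativity $\mathcal{L}_p'(t)+c\,\mathcal{L}_p(t)\leq 0$ with $c>0$ is unattainable in the pure homogeneous Neumann case with $K_i=0$ (spatially constant steady states make $\mathcal{L}_p$ constant), and "subtracting the conserved mean" is incompatible with the polynomial functional, which again needs non-negative arguments. Fortunately it is also unnecessary: the paper only derives $\widetilde{\mathcal{L}}'(t)\leq 0$, hence $\sup_{t\geq 0}\mathcal{L}_p(t)<\infty$, and then obtains the uniform $L^\infty$ bound not by a global bootstrap (which would give $T$-dependent constants) but by a time-localization with a cut-off $\psi_\tau$ on windows $(\tau,\tau+\rho)$ and the parabolic smoothing estimate with constants independent of $\tau$. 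Your appeal to ultracontractive semigroup estimates can be made to do the same job, but only if it is explicitly run on unit time windows with data controlled by the time-uniform $L^p$ bound; as written, the proposal does not make the $\tau$-independence of the constants visible.
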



\subsection{Structure of the paper}
 The rest of this paper is organized as follows. Section \ref{Sec_ProofMRs} is devoted to the proofs of the main results; Subsection \ref{Sec2_Preliminaries} recalls some preliminaries, Subsection \ref{Thrm1_GE} establishes the global existence of solutions in Theorem \ref{main_Theor}, Subsection \ref{Thrm1_UBt} derives the uniform-in-time bounds associated with Theorem \ref{main_Theor}, Subsection \ref{Thrm2_Proof} provides the proof of Theorem \ref{main_Theor2}, and Subsection \ref{Sub_sec_GeneralizationsProof} presents the generalizations leading to the proofs of Theorems \ref{main_Theor3GBCs} and \ref{main_Theor4GBCs}. Section \ref{Sec_Applications} discusses applications of the results to specific models; Example~1 is treated in Subsection \ref{Sec_Applications_Ex1}, Example~2 in Subsection \ref{Sec_Applications_Ex2}, and Example~3 of three-species sub-skew-symmetric Lotka–Volterra systems with higher-order interactions, in Subsection \ref{Sec_Applications_Ex3}.

\section{Proof of main results}
\label{Sec_ProofMRs}
\subsection{Preliminaries}
\label{Sec2_Preliminaries}
We start with the definition of classical solutions.
\begin{definition}[Classical solutions]
	Let $0<T\leq\infty$. A classical solution to \eqref{main_system_GBCs} on $(0,T)$ is a vector of functions $S:=(u,v,w)$, satisfying $u,v,w\in  C([0,T];L^p(\Omega))\cap L^\infty(\Omega\times(0,T))\cap C^{1,2}(\overline{\Omega}\times(\tau,T))$ for all $p>1$ and all $0<\tau<T$, and $S$ satisfies each equation in \eqref{main_system_GBCs} pointwise in $\Omega\times(0,T)$.
\end{definition}
According to assumption \ref{a1}, it is a classical task to show
the existence of a unique local nonnegative classical solution of system (%
\ref{main_system}) on $[0,T_{max})$, where $T_{max}$ is the eventual
blow-up time in $L^{\infty }\left( \Omega \right) $ (see e.g. \cite%
{Henry1981,Rothe1984}).

\begin{proposition}
\label{Theor_LocExst} Suppose that $u_{0},v_{0},w_{0}\in L^{\infty }\left(
\Omega ;\mathbb{R}_{+}\right) $ and \ref{a1} holds. Then there exists $%
T_{max}>0$ such that (\ref{main_system}) admits a unique nonnegative
classical solution on $[0,T_{max})$. Moreover, if $T_{max}<+\infty $, then 
\begin{equation*}
\limsup_{t\rightarrow T_{max}^{-}}\left( \Vert u(\cdot ,t)\Vert _{L^{\infty
}(\Omega )}+\Vert v(\cdot ,t)\Vert _{L^{\infty }(\Omega )}+\Vert w(\cdot
,t)\Vert _{L^{\infty }(\Omega )}\right) =+\infty.
\end{equation*}
\end{proposition}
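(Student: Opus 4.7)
The plan is to recast system (\ref{main_system}) as an abstract semilinear Cauchy problem
\[
\frac{dU}{dt} + \mathcal{A}\,U = F(U), \qquad U(0) = U_{0},
\]
in the Banach space $X := L^{\infty}(\Omega;\mathbb{R}^{3})$, where $U=(u,v,w)^{T}$, $U_{0}=(u_{0},v_{0},w_{0})^{T}$, $F(U)=(f(U),g(U),h(U))^{T}$, and $\mathcal{A} = \mathrm{diag}(-d_{1}\Delta_{N},-d_{2}\Delta_{N},-d_{3}\Delta_{N})$ with $\Delta_{N}$ the Laplacian under homogeneous Neumann conditions on $\partial\Omega$. Since $\Omega$ has smooth boundary, $-\mathcal{A}$ generates an analytic semigroup on $X$, and by \ref{a1} the Nemitskii operator $F$ is locally Lipschitz from $X$ into $X$. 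The classical theory of analytic semigroups (see \cite{Henry1981,Rothe1984}) then produces, via a standard Banach fixed-point argument applied to the variation-of-constants formula, a unique mild solution on some interval $[0,T]$ for $T$ small; iterating this construction yields a maximal existence time $T_{\max}\in(0,+\infty]$. Parabolic smoothing bootstraps the mild solution into the regularity class $C^{1,2}(\overline{\Omega}\times(0,T_{\max}))$ required by the definition of classical solution.

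For the nonnegativity, I would employ a truncation together with the quasi-positivity in \ref{a1}. Replace $f,g,h$ by locally Lipschitz extensions $\tilde{f},\tilde{g},\tilde{h}$ that agree with the original nonlinearities on $\mathbb{R}_{+}^{3}$ and still satisfy the quasi-positivity on the boundary faces (for instance $\tilde{f}(\xi)=f(\xi_{1}^{+},\xi_{2}^{+},\xi_{3}^{+})$, and analogously for $g,h$). Run the same semigroup argument for the truncated system, then multiply the $u$-equation by $-u^{-}$ and integrate over $\Omega$; the diffusion term has the favorable sign under Neumann BCs, while the quasi-positivity condition $\tilde{f}(0,\xi_{2},\xi_{3})\geq 0$ controls the reaction contribution on the set $\{u<0\}$. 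A Grönwall estimate forces $u^{-}\equiv 0$, and similarly $v^{-}\equiv w^{-}\equiv 0$; hence the truncated and original systems coincide on $[0,T_{\max})$, and $(u,v,w)\in\mathbb{R}_{+}^{3}$ for all times.

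The blow-up alternative follows from the maximality of $T_{\max}$: if $\limsup_{t\to T_{\max}^{-}}\bigl(\|u(\cdot,t)\|_{L^{\infty}}+\|v(\cdot,t)\|_{L^{\infty}}+\|w(\cdot,t)\|_{L^{\infty}}\bigr)<+\infty$, one could pick $t^{\star}$ close to $T_{\max}$ and restart the local existence theorem from $U(\cdot,t^{\star})\in L^{\infty}(\Omega;\mathbb{R}_{+}^{3})$. Since the length of the local interval provided by the fixed-point argument depends only on the $L^{\infty}$-norm of the initial datum and the local Lipschitz constant of $F$ on the ball it defines, this produces an extension strictly beyond $T_{\max}$, contradicting its maximality.

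The only nontrivial step is the nonnegativity argument, since it requires choosing a truncation compatible with quasi-positivity on each of the three coordinate faces simultaneously; once this is done, everything else is a direct application of standard analytic-semigroup theory for semilinear parabolic systems and is the reason the paper simply cites \cite{Henry1981,Rothe1984}.
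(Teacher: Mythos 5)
Your proposal is correct and is precisely the standard analytic-semigroup argument (variation of constants, Banach fixed point, truncation plus quasi-positivity for invariance of $\mathbb{R}_{+}^{3}$, and the blow-up alternative from the $L^{\infty}$-dependence of the local existence time) that the paper itself does not write out but simply delegates to \cite{Henry1981,Rothe1984}. The only points worth flagging are routine: the heat semigroup on $L^{\infty}(\Omega)$ is analytic but not strongly continuous at $t=0$, which is the usual reason one works in $C(\overline{\Omega})$ or uses $L^{p}$--$L^{\infty}$ smoothing, and in your nonnegativity step no Gr\"onwall factor is actually needed since $\int_{\Omega}(u^{-})^{2}$ is directly nonincreasing.
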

It is essential to first establish the following lemmas.

\begin{lemma}
\label{LIWS_lemma} Let $\Phi (\xi ),\Psi (\xi)$ be two real-valued functions defined on $\mathbb{R}_{+}^{3}$%
, $\alpha >1$ (or $0<\alpha <1$), and $C_{1},C_{2}\geq 0$. Then, for all $
\xi\in \mathbb{R}_{+}^{3}$, we have 
\begin{equation}
\text{ }\left\{ 
\begin{array}{c}
\Phi (\xi )+\Psi (\xi)\leq
C_{1}\Lambda(\xi), \\ \\
\text{and} \\ \\
\alpha \Phi (\xi )+\Psi (\xi)\leq C_{2}\Lambda(\xi),
\end{array}%
\right.
\end{equation}%
implies 
\begin{equation*}
\alpha ^{\ast }\Phi (\xi )+\Psi (\xi)\leq \max \left\{ C_{1},C_{2}\right\} \Lambda(\xi),
\end{equation*}
for all $\alpha^{\ast }\in \lbrack 1,\alpha ]$ \ (or for all 
$\alpha ^{\ast }\in \lbrack \alpha ,1]$).
\end{lemma}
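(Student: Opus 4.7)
The plan is to prove this lemma by a direct convex combination argument, since the assumed inequalities are linear in $\Phi$ and $\Psi$ while the target inequality depends linearly on the coefficient $\alpha^{\ast}$.

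First I would handle the case $\alpha>1$. Given any $\alpha^{\ast}\in[1,\alpha]$, there exists a unique $t\in[0,1]$ such that $\alpha^{\ast}=(1-t)\cdot 1+t\cdot\alpha$, namely $t=(\alpha^{\ast}-1)/(\alpha-1)$. Then, for every $\xi\in\mathbb{R}_{+}^{3}$, I would decompose
\begin{equation*}
\alpha^{\ast}\Phi(\xi)+\Psi(\xi)=(1-t)\bigl(\Phi(\xi)+\Psi(\xi)\bigr)+t\bigl(\alpha\Phi(\xi)+\Psi(\xi)\bigr),
\end{equation*}
and apply the two assumed bounds on the right-hand side. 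Since $\Lambda(\xi)=1+\xi_{1}+\xi_{2}+\xi_{3}\geq 1>0$ on $\mathbb{R}_{+}^{3}$ and the weights $1-t,t\geq 0$, this gives
\begin{equation*}
\alpha^{\ast}\Phi(\xi)+\Psi(\xi)\leq\bigl((1-t)C_{1}+tC_{2}\bigr)\Lambda(\xi)\leq\max\{C_{1},C_{2}\}\,\Lambda(\xi),
\end{equation*}
which is the conclusion.

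For the second case $0<\alpha<1$ with $\alpha^{\ast}\in[\alpha,1]$, I would apply the exact same decomposition with the roles reversed: write $\alpha^{\ast}=(1-s)\alpha+s\cdot 1$ for the unique $s=(\alpha^{\ast}-\alpha)/(1-\alpha)\in[0,1]$, and repeat the same linear combination argument to reach the same bound.

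There is no real obstacle here: the statement is essentially a one-line interpolation. The only thing to be careful about is that the nonnegativity of the combination coefficients is used only to turn the hypothesized pointwise inequalities into a pointwise inequality for the combination, which is legitimate because $\Lambda(\xi)\geq 1$ and because we do not need any sign assumption on $\Phi$ or $\Psi$ separately. It might be worth remarking at the end that the lemma extends verbatim (with the same proof) to any finite family of weighted inequalities, which is the form in which it will be invoked in the proofs of Theorems \ref{main_Theor}--\ref{main_Theor4GBCs} when handling the three conditions in \eqref{EqCond_a3} simultaneously.
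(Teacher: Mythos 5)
Your proof is correct. It takes a genuinely different, and in fact more direct, route than the paper: you write $\alpha^{\ast}$ as a convex combination $(1-t)\cdot 1+t\cdot\alpha$ and interpolate the two hypothesized inequalities, using only the nonnegativity of the weights and of $\Lambda(\xi)$, which yields the conclusion constructively and covers the endpoints $\alpha^{\ast}=1$ and $\alpha^{\ast}=\alpha$ in the same stroke. The paper instead argues by contradiction: assuming $\alpha^{\ast}\Phi(\xi)+\Psi(\xi)>\max\{C_{1},C_{2}\}\Lambda(\xi)$, it subtracts this from each hypothesis to obtain $(1-\alpha^{\ast})\Phi(\xi)<0$ and $(\alpha-\alpha^{\ast})\Phi(\xi)<0$, which force contradictory signs on $\Phi(\xi)$ when $\alpha^{\ast}$ lies strictly between $1$ and $\alpha$ (the endpoint cases being trivial from the hypotheses). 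The two arguments rest on the same underlying fact, namely that $\alpha^{\ast}\Phi+\Psi$ is sandwiched between $\Phi+\Psi$ and $\alpha\Phi+\Psi$; your version makes this quantitative and arguably cleaner, and it also yields the slightly sharper bound with constant $(1-t)C_{1}+tC_{2}$ rather than the maximum. Your closing remark is apt but worth a caveat: in the proofs of the main theorems the lemma is invoked with two weights $\theta^{2i}$ and $\sigma^{2j}$ simultaneously, so what is actually needed is an iterated (or multi-parameter) application of the one-parameter statement to the four inequalities coming from \ref{a2} and \eqref{EqCond_a3}; your convex-combination argument extends to that setting verbatim.
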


\begin{proof}
The proof is by contradiction. Let $\xi\in 
\mathbb{R}_{+}^{3}$, and there exists $\alpha ^{\ast }\in \lbrack 1,\alpha ]$%
, such that 
\begin{equation*}
\left\{ 
\begin{array}{c}
\Phi (\xi )+\Psi (\xi )\leq
C_{1}\Lambda(\xi), \\ 
\text{and} \\ 
\alpha \Phi (\xi )+\Psi (\xi )\leq C_{2}\Lambda(\xi),%
\end{array}%
\right. 
\end{equation*}%
and%
\begin{equation*}
\alpha ^{\ast }\Phi (\xi )+\Psi (\xi )>\max \left\{ C_{1},C_{2}\right\} \Lambda(\xi).
\end{equation*}%
Thus 
\begin{equation*}
\left\{ 
\begin{array}{c}
\Phi (\xi )+\Psi (\xi)<\alpha
^{\ast }\Phi (\xi)+\Psi (\xi),
\\ 
\text{and} \\ 
\alpha \Phi (\xi )+\Psi (\xi )<\alpha ^{\ast }\Phi (\xi )+\Psi (\xi).
\end{array}%
\right. 
\end{equation*}%
We obtain%
\begin{equation*}
\left\{ 
\begin{array}{c}
\Phi (\xi )+\Psi (\xi )-\alpha
^{\ast }\Phi (\xi)-\Psi (\xi )<0,
\\ 
\text{and} \\ 
\alpha \Phi (\xi)+\Psi (\xi )-\alpha ^{\ast }\Phi (\xi )-\Psi (\xi)<0.
\end{array}%
\right. 
\end{equation*}%
Hence, for the case $\alpha ^{\ast }\in (1,\alpha )$, we get%
\begin{equation*}
\left\{ 
\begin{array}{c}
(1-\alpha ^{\ast })\Phi (\xi )<0 \text{ implies } \Phi(\xi )>0, \\ 
\text{and} \\ 
(\alpha -\alpha ^{\ast })\Phi (\xi)<0 \text{ implies }
\Phi (\xi )<0.%
\end{array}%
\right.
\end{equation*}%
This leads to a contradiction.
For the case $\alpha ^{\ast }\in (\alpha ,1)$, we obtain
\begin{equation*}
\left\{ 
\begin{array}{c}
(1-\alpha ^{\ast })\Phi (\xi)<0\text{ implies } \Phi
(\xi)<0, \\ 
\text{and} \\ 
(\alpha -\alpha ^{\ast })\Phi (\xi)<0\text{ implies }
\Phi (\xi )>0.%
\end{array}%
\right. 
\end{equation*}%
We arrive at a contradiction. Therefore, the desired result holds.
\end{proof}

\begin{lemma}
\label{LemmaD1Hp}
Let $p$ be a positive integer, and 
\begin{equation}
\widehat{\mathcal{H}}_{p}(\xi)=\sum_{j=0}^{p}\sum_{i=0}^{j}C_{p}^{j}C_{j}^{i}\theta _{i}\sigma
_{j}\xi _{1}^{i}\xi _{2}^{j-i}\xi _{3}^{p-j}\text{, for all } \xi\in 
\mathbb{R}
_{+}^{3},  \label{HomPolyH}
\end{equation}%
be a homogeneous polynomial with $\left( \theta _{i}\right) _{0\leq i\leq p}$
and $\left( \sigma _{j}\right) _{0\leq j\leq p}$ arbitrary finite
sequences. Then, we have 
\begin{equation*}
\partial _{\xi _{1}}\widehat{\mathcal{H}}_{p}(\xi)=p\sum_{j=0}^{p-1}\sum_{i=0}^{j}C_{p-1}^{j}C_{j}^{i}\theta _{i+1}\sigma
_{j+1}\xi _{1}^{i}\xi _{2}^{j-i}\xi _{3}^{\left( p-1\right) -j},
\end{equation*}%
\begin{equation*}
\partial _{\xi _{2}}\widehat{\mathcal{H}}_{p}(\xi )=p\sum_{j=0}^{p-1}\sum_{i=0}^{j}C_{p-1}^{j}C_{j}^{i}\theta _{i}\sigma
_{j+1}\xi _{1}^{i}\xi _{2}^{j-i}\xi _{3}^{\left( p-1\right) -j},
\end{equation*}%
\begin{equation*}
\partial _{\xi _{3}}\widehat{\mathcal{H}}_{p}(\xi )=p\sum_{j=0}^{p-1}\sum_{i=0}^{j}C_{p-1}^{j}C_{j}^{i}\theta _{i}\sigma
_{j}\xi _{1}^{i}\xi _{2}^{j-i}\xi _{3}^{\left( p-1\right) -j}.
\end{equation*}
\end{lemma}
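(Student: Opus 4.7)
The plan is to compute each of $\partial_{\xi_1}\widehat{\mathcal{H}}_p$, $\partial_{\xi_2}\widehat{\mathcal{H}}_p$ and $\partial_{\xi_3}\widehat{\mathcal{H}}_p$ by direct term-by-term differentiation of \eqref{HomPolyH}, dropping the monomials whose partial derivative vanishes, re-indexing so that the remaining sums run over $0 \le i' \le j'$, $0 \le j' \le p-1$, and then matching the resulting expression against the target by means of three elementary binomial identities, each an immediate consequence of expanding $C_n^k$ as $n!/(k!(n-k)!)$.

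For $\partial_{\xi_1}$, I would use $\partial_{\xi_1}\xi_1^i = i\,\xi_1^{i-1}$: the factor $i$ annihilates the $i=0$ terms, after which the substitution $i' = i-1$, $j' = j-1$ compresses the index range as above. The identity
\begin{equation*}
C_p^{j'+1}\,C_{j'+1}^{i'+1}\,(i'+1) \;=\; p\,C_{p-1}^{j'}\,C_{j'}^{i'}
\end{equation*}
then yields the stated formula, with the coefficients $\theta_i\sigma_j$ transforming into $\theta_{i'+1}\sigma_{j'+1}$.

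For $\partial_{\xi_2}$, I would use $\partial_{\xi_2}\xi_2^{j-i} = (j-i)\,\xi_2^{j-i-1}$ to kill the terms with $i=j$, keep $i$ unchanged, set $j' = j-1$, and apply
\begin{equation*}
C_p^{j'+1}\,C_{j'+1}^{i}\,(j'+1-i) \;=\; p\,C_{p-1}^{j'}\,C_{j'}^{i},
\end{equation*}
so that $\theta_i\sigma_j$ becomes $\theta_i\sigma_{j'+1}$. For $\partial_{\xi_3}$, only the $j=p$ term vanishes and no re-indexing on $i$ or $j$ is needed: the simpler identity $C_p^j\,(p-j) = p\,C_{p-1}^j$ does the job, and the weights $\theta_i\sigma_j$ are preserved.

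The content is essentially combinatorial bookkeeping, so I do not expect any analytic obstacle. The one point requiring attention is keeping track of exactly how the subscripts on $\theta_\bullet$ and $\sigma_\bullet$ shift under each re-indexing, because it is precisely these shifts (together with the dropped boundary layer of the sum in each variable) that distinguish the three stated formulas.
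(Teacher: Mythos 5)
Your computation is correct: all three binomial identities check out (e.g. $C_p^{j'+1}C_{j'+1}^{i'+1}(i'+1)=\frac{p!}{(p-1-j')!\,i'!\,(j'-i')!}=p\,C_{p-1}^{j'}C_{j'}^{i'}$, and similarly for the other two), and the index shifts on $\theta_\bullet$, $\sigma_\bullet$ are tracked correctly. The paper does not write out a proof but simply defers to \cite{Kouachi2002,Abdelmalek2007}, where exactly this kind of term-by-term differentiation and re-indexing is performed, so your argument is the intended one, merely made self-contained.
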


\begin{proof}
It is sufficient to follow the proof steps provided in \cite{Kouachi2002,Abdelmalek2007}.
\end{proof}

\begin{lemma}
\label{LemmaD2Hp}
Let $\widehat{\mathcal{H}}_{p}$ be the homogeneous polynomial defined in $%
\left( \text{\ref{HomPolyH}}\right) $. Then%
\begin{equation*}
\partial^{2} _{\xi _{1}^{2}}\widehat{\mathcal{H}}_{p}(\xi )=p(p-1)\sum_{j=0}^{p-2}\sum_{i=0}^{j}C_{p-2}^{j}C_{j}^{i}\theta
_{i+2}\sigma _{j+2}\xi _{1}^{i}\xi _{2}^{j-i}\xi _{3}^{\left( p-2\right) -j},
\end{equation*}%
\begin{equation*}
\partial^{2} _{\xi _{2}^{2}}\widehat{\mathcal{H}}_{p}(\xi)=p(p-1)\sum_{j=0}^{p-2}\sum_{i=0}^{j}C_{p-2}^{j}C_{j}^{i}\theta
_{i}\sigma _{j+2}\xi _{1}^{i}\xi _{2}^{j-i}\xi _{3}^{\left( p-2\right) -j},
\end{equation*}%
\begin{equation*}
\partial^{2} _{\xi _{3}^{2}}\widehat{\mathcal{H}}_{p}(\xi)=p(p-1)\sum_{j=0}^{p-2}\sum_{i=0}^{j}C_{p-2}^{j}C_{j}^{i}\theta
_{i}\sigma _{j}\xi _{1}^{i}\xi _{2}^{j-i}\xi _{3}^{\left( p-2\right) -j},
\end{equation*}%
\begin{equation*}
\partial^{2} _{\xi _{1}\xi _{2}}\widehat{\mathcal{H}}_{p}(\xi)=p(p-1)\sum_{j=0}^{p-2}\sum_{i=0}^{j}C_{p-2}^{j}C_{j}^{i}\theta
_{i+1}\sigma _{j+2}\xi _{1}^{i}\xi _{2}^{j-i}\xi _{3}^{\left( p-2\right) -j},
\end{equation*}%
\begin{equation*}
\partial^{2} _{\xi _{1}\xi _{3}}\widehat{\mathcal{H}}_{p}(\xi)=p(p-1)\sum_{j=0}^{p-2}\sum_{i=0}^{j}C_{p-2}^{j}C_{j}^{i}\theta
_{i+1}\sigma _{j+1}\xi _{1}^{i}\xi _{2}^{j-i}\xi _{3}^{\left( p-2\right) -j},
\end{equation*}%
\begin{equation*}
\partial^{2} _{\xi _{2}\xi _{3}}\widehat{\mathcal{H}}_{p}(\xi )=p(p-1)\sum_{j=0}^{p-2}\sum_{i=0}^{j}C_{p-2}^{j}C_{j}^{i}\theta
_{i}\sigma _{j+1}\xi _{1}^{i}\xi _{2}^{j-i}\xi _{3}^{\left( p-2\right) -j}.
\end{equation*}
\end{lemma}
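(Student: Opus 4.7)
The proof will be essentially a second application of Lemma \ref{LemmaD1Hp}. The key observation is that each of the three first-order partial derivatives given in Lemma \ref{LemmaD1Hp} is itself a homogeneous polynomial of degree $p-1$ of exactly the same form as $\widehat{\mathcal{H}}_{p}$, but with the coefficient sequences $(\theta_i)$ and $(\sigma_j)$ shifted. More precisely, if we set
\[
\widetilde{\theta}^{(k)}_i := \theta_{i+k_1}, \qquad \widetilde{\sigma}^{(k)}_j := \sigma_{j+k_2},
\]
then each of $\partial_{\xi_1}\widehat{\mathcal{H}}_p$, $\partial_{\xi_2}\widehat{\mathcal{H}}_p$, $\partial_{\xi_3}\widehat{\mathcal{H}}_p$ has the form $p\,\widehat{\mathcal{H}}_{p-1}$ where $\widehat{\mathcal{H}}_{p-1}$ is built from an appropriate pair of shifted sequences. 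Specifically, the shifts are $(k_1,k_2)=(1,1)$ for $\partial_{\xi_1}$, $(0,1)$ for $\partial_{\xi_2}$, and $(0,0)$ for $\partial_{\xi_3}$.

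The plan is then to apply Lemma \ref{LemmaD1Hp} a second time, once to each of the three first-order derivatives, using the corresponding shifted sequences in place of $(\theta_i,\sigma_j)$. This yields formulas in which the binomial coefficient $C_{p-1}^{j}$ is replaced by $C_{p-2}^{j}$, the outer factor $p$ becomes $p(p-1)$, the summation runs up to $p-2$, and the indices of $\theta$ and $\sigma$ are incremented according to the combined shift. For instance, to compute $\partial^{2}_{\xi_1\xi_2}\widehat{\mathcal{H}}_p$, I would take the expression for $\partial_{\xi_2}\widehat{\mathcal{H}}_p$, which corresponds to the shift $(0,1)$, and apply the $\partial_{\xi_1}$-formula from Lemma \ref{LemmaD1Hp}, which further adds the shift $(1,1)$, producing the total shift $(1,2)$; this reproduces the claimed formula with $\theta_{i+1}\sigma_{j+2}$. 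The remaining five second-order derivatives are handled identically by the appropriate composition of shifts.

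The only minor care needed is to keep track of the book-keeping: one must verify that the shifts in $(\theta,\sigma)$ commute correctly, that the binomial coefficients combine as $C_p^{j}\to C_{p-1}^{j}\to C_{p-2}^{j}$ and the scalar prefactor as $p\to p(p-1)$, and that the range of the indices $(i,j)$ after two differentiations is indeed $0\leq i\leq j\leq p-2$. Since mixed second derivatives must coincide (Schwarz), one can also check consistency: $\partial_{\xi_1\xi_2}^2$ and $\partial_{\xi_2\xi_1}^2$ both produce the shift $(1,2)$, matching the stated formula.

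I do not anticipate any genuine obstacle here. The whole argument is a mechanical repetition of Lemma \ref{LemmaD1Hp}, whose own proof is merely referenced to \cite{Kouachi2002,Abdelmalek2007}. The cleanest presentation is therefore to \emph{state} the shift correspondence explicitly, \emph{quote} Lemma \ref{LemmaD1Hp}, and then write down the six resulting expressions, leaving the reader to verify the elementary index arithmetic. If a more self-contained proof were desired, one could instead differentiate the monomial $\xi_1^i\xi_2^{j-i}\xi_3^{p-j}$ term by term and re-index the sums, but this yields the same formulas after a slightly longer calculation.
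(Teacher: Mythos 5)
Your proposal is correct. All six formulas follow from your shift bookkeeping: writing $\partial_{\xi_k}\widehat{\mathcal{H}}_{p}=p\,\widehat{\mathcal{H}}_{p-1}$ with the coefficient sequences replaced by $\theta_{i+k_1}$, $\sigma_{j+k_2}$ for the shifts $(1,1)$, $(0,1)$, $(0,0)$ corresponding to $\xi_1,\xi_2,\xi_3$, a second application of Lemma \ref{LemmaD1Hp} composes the shifts additively and turns $p\,C_{p-1}^{j}$ into $p(p-1)C_{p-2}^{j}$ with the summation range dropping to $0\leq i\leq j\leq p-2$; I checked that the six resulting index pairs $(2,2)$, $(0,2)$, $(0,0)$, $(1,2)$, $(1,1)$, $(0,1)$ match the stated exponents of $\theta$ and $\sigma$ exactly, and the Schwarz consistency check you mention is a useful sanity check. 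Note that the paper itself gives no argument for this lemma beyond deferring to \cite{Kouachi2002,Abdelmalek2007}, where the computation is carried out by direct term-by-term differentiation and re-indexing; your observation that the first derivatives are again polynomials of the form \eqref{HomPolyH} with shifted sequences lets you reuse Lemma \ref{LemmaD1Hp} verbatim and is the cleaner and more self-contained of the two routes, at the cost of no generality.
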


\begin{proof}
It is sufficient to follow the proof steps provided in \cite{Kouachi2002,Abdelmalek2007}.
\end{proof}
\subsection{Theorem \ref{main_Theor}: Global existence}
\label{Thrm1_GE}
The key element of the proof of Theorem \ref{main_Theor} is a Lyapunov functional introduced in the following proposition.
\begin{proposition}
\label{Prop1} Let $p$ be any positive integer, the positive real numbers $\theta$ and $\sigma $ satisfy  (\ref{Teta_Sigma_Conds}), (\ref{Cond_p}). Assume that $f,g,h$ satisfy \ref{a1} with \ref{a2}, and \ref{a3} for some $%
\lambda _{1}\geq $ $\theta ^{2(p-1)}$, $\lambda _{2}\geq $ $\sigma ^{2(p-1)}$%
. Let $S$ be a maximal solution of system (\ref{main_system}), and let
\begin{equation*}
\mathcal{L}(t)=\int_{\Omega }\mathcal{H}_{p}(S(x,t))dx,
\end{equation*}%
where%
\begin{equation*}
\mathcal{H}_{p}(S)=\sum_{j=0}^{p}\sum_{i=0}^{j}C_{p}^{j}C_{j}^{i}\theta
^{i^{2}-i}\sigma ^{j^{2}-j}u^{i}v^{j-i}w^{p-j}.
\end{equation*}%
Then the functional $\mathcal{L}$ is uniformly bounded on the interval $%
[0,T^{\ast }]$, where $T^{\ast }<T_{max}$.
\end{proposition}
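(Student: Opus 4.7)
The plan is to differentiate $\mathcal{L}(t)$ along the flow, split the result into a diffusive part (which will be nonpositive) and a reactive part (which will be Gronwall-controlled), and then invoke Gronwall's lemma. Writing $S=(u,v,w)$ and using the local classical regularity on $[0,T^*]$ from Proposition~\ref{Theor_LocExst} together with the Neumann boundary conditions in \eqref{main_system}, a standard integration by parts gives
\begin{equation*}
\frac{d\mathcal{L}}{dt}(t) \;=\; -\int_{\Omega} Q_p(S;\nabla S)\,dx \;+\; \int_{\Omega} R_p(S)\,dx,
\end{equation*}
where $Q_p$ is the pointwise quadratic form in the three gradient vectors whose coefficient matrix is built from the second partials of $\mathcal{H}_p$ (Lemma~\ref{LemmaD2Hp}) and from the coefficients $d_1,d_2,d_3$, and $R_p(S):=\partial_{\xi_1}\mathcal{H}_p\,f+\partial_{\xi_2}\mathcal{H}_p\,g+\partial_{\xi_3}\mathcal{H}_p\,h$.

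The reactive term will be handled by first evaluating $R_p$ with the formulas of Lemma~\ref{LemmaD1Hp}; after extracting the positive factor $\theta^{i^{2}-i}\sigma^{j^{2}-j}$, a direct computation shows
\begin{equation*}
R_p(S) \;=\; p\sum_{j=0}^{p-1}\sum_{i=0}^{j}C_{p-1}^{j}C_{j}^{i}\,\theta^{i^{2}-i}\sigma^{j^{2}-j}\,u^{i}v^{j-i}w^{p-1-j}\Bigl[\theta^{2i}\sigma^{2j}f(S)+\sigma^{2j}g(S)+h(S)\Bigr].
\end{equation*}
For each pair $(i,j)$ with $0\le i\le j\le p-1$ we have $\theta^{2i}\le\theta^{2(p-1)}\le\lambda_{1}$ and $\sigma^{2j}\le\sigma^{2(p-1)}\le\lambda_{2}$, so the bracket is controlled by $C\,\Lambda(S)$ by a two-stage application of Lemma~\ref{LIWS_lemma}: first interpolate \ref{a2} with the first inequality of \ref{a3} (taking $\Phi=f$, $\Psi=g+h$) to obtain $\theta^{2i}f+g+h\le C\Lambda$; then interpolate the second and third inequalities of \ref{a3} to obtain $\theta^{2i}\lambda_{2}f+\lambda_{2}g+h\le C\Lambda$; finally apply Lemma~\ref{LIWS_lemma} a second time to the pair $(\theta^{2i}f+g,\;h)$ with the two bounds just derived. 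Summing the resulting pointwise estimate and collecting the monomials, $\int_\Omega R_p(S)\,dx$ is bounded by a constant multiple of $\int_\Omega (1+u+v+w)\,\mathcal{P}_{p-1}(S)\,dx$, where $\mathcal{P}_{p-1}$ is the analogous homogeneous polynomial of degree $p-1$. Since $\mathcal{H}_p$ dominates every monomial of degree $\le p$ in $u,v,w$ (its coefficients are strictly positive and $u,v,w\ge 0$), this yields $\int_\Omega R_p(S)\,dx \le C_1 \mathcal{L}(t)+C_2$ for constants depending only on $p,\theta,\sigma$ and the constants appearing in \ref{a2}, \ref{a3}.

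The main obstacle, and the place where the hypotheses \eqref{Teta_Sigma_Conds} are exactly used, is to show $Q_p(S;\nabla S)\ge 0$ pointwise, so that the diffusive term can simply be dropped. Substituting the expressions from Lemma~\ref{LemmaD2Hp} and factoring the common positive weight $p(p-1)\theta^{i^{2}-i}\sigma^{j^{2}-j}u^{i}v^{j-i}w^{p-2-j}$ in front of each monomial, it suffices to establish the nonnegativity of each $3\times 3$ symmetric matrix
\begin{equation*}
\mathcal{M}_{i,j}=\begin{pmatrix}
 2 d_1\,\theta^{2(i+2)-2}\sigma^{2(j+2)-2} & (d_1+d_2)\,\theta^{2i+1}\sigma^{2(j+2)-2} & (d_1+d_3)\,\theta^{2i+1}\sigma^{2j+1} \\
 (d_1+d_2)\,\theta^{2i+1}\sigma^{2(j+2)-2} & 2 d_2\,\sigma^{2(j+2)-2} & (d_2+d_3)\,\sigma^{2j+1} \\
 (d_1+d_3)\,\theta^{2i+1}\sigma^{2j+1} & (d_2+d_3)\,\sigma^{2j+1} & 2 d_3
\end{pmatrix},
\end{equation*}
after rescaling rows/columns by $\sqrt{d_k}$ and the appropriate powers of $\theta,\sigma$. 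Computing the leading principal minors, each factor $2\sqrt{d_k d_l}$ is matched with the off-diagonal $(d_k+d_l)$ and produces the $A_{kl}$ combinations. The positivity of the first two minors reduces exactly to $\theta>A_{12}$, while the Sylvester condition for the full $3\times 3$ minor reduces, after simplification, to the second inequality of \eqref{Teta_Sigma_Conds}. This is the same algebra as in \cite{Kouachi2002,Abdelmalek2007}, adapted to the two-parameter family of weights $(\theta,\sigma)$, and it is where the full strength of \eqref{Teta_Sigma_Conds} is needed.

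Combining the two estimates, $\frac{d\mathcal{L}}{dt}(t)\le C_1\mathcal{L}(t)+C_2$ on $[0,T^{\ast}]$, hence by Gronwall's inequality $\mathcal{L}(t)\le\bigl(\mathcal{L}(0)+C_2 T^{\ast}\bigr)e^{C_1 T^{\ast}}$, which is the desired uniform bound on $[0,T^{\ast}]$.
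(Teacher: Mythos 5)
Your proposal follows essentially the same route as the paper: differentiate the functional, integrate by parts, kill the gradient term by Sylvester's criterion under \eqref{Teta_Sigma_Conds}, control the reaction term through Lemma \ref{LIWS_lemma} applied to the weighted sums $\theta^{2i}\sigma^{2j}f+\sigma^{2j}g+h$, and close with a Gronwall-type inequality. Two cosmetic differences: the paper first substitutes $\varphi=\theta v$, $\psi=\theta\sigma w$ and works with $\widetilde{\mathcal{L}}=(\theta\sigma)^p\mathcal{L}$, which cleans up the exponents in the Hessian matrices, and it keeps the degree-$(p-1)$ contribution as $M_2\widetilde{\mathcal{L}}^{(p-1)/p}$ before substituting $\mathcal{E}=\widetilde{\mathcal{L}}^{1/p}$, whereas you absorb it directly into $C_1\mathcal{L}+C_2$; both are fine, and your two-stage use of Lemma \ref{LIWS_lemma} is actually more explicit than the paper's. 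One genuine slip: the exponents in your displayed matrix $\mathcal{M}_{i,j}$ are wrong. Working directly with $\mathcal{H}_p$ and factoring out $\theta^{i^2-i}\sigma^{j^2-j}$, the $(1,1)$ entry carries $\theta^{(i+2)^2-(i+2)-(i^2-i)}\sigma^{(j+2)^2-(j+2)-(j^2-j)}=\theta^{4i+2}\sigma^{4j+2}$, the $(1,2)$ entry $\theta^{2i}\sigma^{4j+2}$, and so on — not $\theta^{2i+2}\sigma^{2j+2}$ and $\theta^{2i+1}\sigma^{2j+2}$ as you wrote; with your exponents the second leading minor would require $\theta^{2i}<A_{12}^{-2}\le 1$, i.e.\ the \emph{opposite} of $\theta>A_{12}$. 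With the corrected exponents the matrix factors as $D\widetilde{M}D$ with $D=\mathrm{diag}(\theta^{2i+1}\sigma^{2j+1},\sigma^{2j+1},1)$ and an $(i,j)$-independent core whose normalized form is
\begin{equation*}
\begin{pmatrix} 1 & A_{12}/\theta & A_{13}/(\theta\sigma) \\ A_{12}/\theta & 1 & A_{23}/\sigma \\ A_{13}/(\theta\sigma) & A_{23}/\sigma & 1 \end{pmatrix},
\end{equation*}
whose leading minors reduce exactly to \eqref{Teta_Sigma_Conds}, confirming the conclusion you state in words.
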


\begin{proof}
Set $\varphi (t,x)=\theta v(t,x)$, $\psi (t,x)=\theta \sigma w(t,x)$ and $%
\widetilde{\mathcal{L}}(t)=\left( \theta \sigma \right) ^{p}\mathcal{L}(t)$.
We obtain%
\begin{equation*}
(\theta \sigma )^{p}\mathcal{H}_{p}(S)=\sum_{j=0}^{p}%
\sum_{i=0}^{j}C_{p}^{j}C_{j}^{i}\theta ^{i^{2}}\sigma ^{j^{2}}u^{i}\varphi
^{j-i}\psi ^{p-j}=:\widetilde{\mathcal{H}}_{p}(u,\varphi ,\psi ),
\end{equation*}%
where $\varphi (t,x)$ and $\psi (t,x)$ solve $\frac{\partial \varphi }{%
\partial t}-d_{2}\Delta \varphi =\theta g(S)$, $\frac{\partial \psi }{%
\partial t}-d_{3}\Delta \psi =\theta \sigma h(S)$, respectively.

Taking the derivative of $\widetilde{\mathcal{L}}$ with respect to $t$ gives%
\begin{eqnarray*}
\widetilde{\mathcal{L}}^{\prime }(t) &=&\int_{\Omega }\left( \partial _{u}%
\widetilde{\mathcal{H}}_{p}(u,\varphi ,\psi )\frac{\partial u}{\partial t}%
+\partial _{\varphi }\widetilde{\mathcal{H}}_{p}(u,\varphi ,\psi )\frac{%
\partial \varphi }{\partial t}+\partial _{\psi }\widetilde{\mathcal{H}}%
_{p}(u,\varphi ,\psi )\frac{\partial \psi }{\partial t}\right) \,dx \\
&=&\int_{\Omega }\left( d_{1}\partial _{u}\widetilde{\mathcal{H}}%
_{p}(u,\varphi ,\psi )\Delta u+d_{2}\partial _{\varphi }\widetilde{\mathcal{H%
}}_{p}(u,\varphi ,\psi )\Delta \varphi +d_{3}\partial _{\psi }\widetilde{%
\mathcal{H}}_{p}(u,\varphi ,\psi )\Delta \psi \right) dx \\
&&+\int_{\Omega }\left( f\left( u,v,w\right) \partial _{u}\widetilde{%
\mathcal{H}}_{p}(u,\varphi ,\psi )+\theta g(u,v,w)\partial _{\varphi }%
\widetilde{\mathcal{H}}_{p}(u,\varphi ,\psi )+\theta \sigma h(u,v,w)\partial
_{\psi }\widetilde{\mathcal{H}}_{p}(u,\varphi ,\psi )\right) dx \\
&=&\mathcal{I}+\mathcal{J}.
\end{eqnarray*}%
Utilizing Green's formula together with the homogeneous Neumann boundary
condition, we derive%
\begin{eqnarray*}
\mathcal{I} &\mathcal{=}&-\int_{\Omega }d_{1}\left( \partial^{2} _{u^{2}}%
\widetilde{\mathcal{H}}_{p}\right) \left\vert \nabla u\right\vert
^{2}+\left( d_{1}+d_{2}\right) \left( \partial^{2} _{u\varphi }\widetilde{%
\mathcal{H}}_{p}\right) \nabla u\nabla \varphi +\left( d_{1}+d_{3}\right)
\left( \partial^{2} _{u\psi }\widetilde{\mathcal{H}}_{p}\right) \nabla u\nabla
\psi dx \\
&&-\int_{\Omega }d_{2}\left( \partial^{2} _{\varphi ^{2}}\widetilde{\mathcal{H}}%
_{p}\right) \left\vert \nabla \varphi \right\vert ^{2}+\left(
d_{2}+d_{3}\right) \left( \partial^{2} _{\varphi \psi }\widetilde{\mathcal{H}}%
_{p}\right) \nabla \varphi \nabla \psi +d_{3}\left( \partial^{2} _{\psi ^{2}}%
\widetilde{\mathcal{H}}_{p}\right) \left\vert \nabla \psi \right\vert ^{2}dx.
\end{eqnarray*}%
By invoking lemma \ref{LemmaD2Hp}, we arrive at%
\begin{equation}
\label{I_Lyap_Func}
\mathcal{I=-}p(p-1)\int_{\Omega
}\sum_{j=0}^{p-2}\sum_{i=0}^{j}C_{p-2}^{j}C_{j}^{i}\left( \mathcal{V}^{T}%
\left( \mathcal{B}_{ij}\otimes I_N \right)\mathcal{V}\right) u^{i}\varphi ^{j-i}\psi ^{\left(
p-2\right) -j}dx,
\end{equation}%
where $I_N$ represents the identity matrix, $\mathcal{V}=\left(\nabla u , \nabla \varphi  , \nabla \psi \right)^{T}$, and 
\begin{equation*}
\mathcal{B}_{ij}=\left( 
\begin{array}{ccc}
d_{1}\theta ^{\left( i+2\right) ^{2}}\sigma ^{\left( j+2\right) ^{2}} & 
\frac{d_{1}+d_{2}}{2}\theta ^{\left( i+1\right) ^{2}}\sigma ^{\left(
j+2\right) ^{2}} & \frac{d_{1}+d_{3}}{2}\theta ^{\left( i+1\right)
^{2}}\sigma ^{\left( j+1\right) ^{2}} \\ 
\frac{d_{1}+d_{2}}{2}\theta ^{\left( i+1\right) ^{2}}\sigma ^{\left(
j+2\right) ^{2}} & d_{2}\theta ^{i^{2}}\sigma ^{\left( j+2\right) ^{2}} & 
\frac{d_{2}+d_{3}}{2}\theta ^{i^{2}}\sigma ^{\left( j+1\right) ^{2}} \\ 
\frac{d_{1}+d_{3}}{2}\theta ^{\left( i+1\right) ^{2}}\sigma ^{\left(
j+1\right) ^{2}} & \frac{d_{2}+d_{3}}{2}\theta ^{i^{2}}\sigma ^{\left(
j+1\right) ^{2}} & d_{3}\theta ^{i^{2}}\sigma ^{j^{2}}
\end{array}%
\right).
\end{equation*}%
According to the assumption (\ref{Teta_Sigma_Conds}), all the principal minors in the
top-left corner of $\mathcal{B}_{ij}$ are positive. Consequently, all the
quadratic forms $\mathcal{V}^{T}%
\left( \mathcal{B}_{ij}\otimes I_N \right)\mathcal{V} $,
for $i=0,...,j$ and  $j=0,...,p-2$, are positive definite. Thus, for all $%
t\in \lbrack 0,T_{max})$, we get 
\begin{equation*}
\mathcal{I\leq }0.
\end{equation*}%
On the other hand,
\begin{eqnarray}
\label{J_Lyap_Func}
\mathcal{J} &=&\int_{\Omega }p\sum_{j=0}^{p-1}\sum_{i=0}^{j}\left[
C_{p-1}^{j}C_{j}^{i}u^{i}\varphi ^{j-i}\psi ^{(p-1)-j}\right] \notag\\
&&\times\left( \theta
^{(i+1)^{2}}\sigma ^{(j+1)^{2}}f\left( S\right) + \theta^{i^{2}+1}\sigma ^{(j+1)^{2}}g\left( S\right) + \theta^{i^{2}+1} \sigma^{j^{2}+1}h\left( S\right) \right) \,dx \notag\\
&=&\int_{\Omega }p\sum_{j=0}^{p-1}\sum_{i=0}^{j}\left[
C_{p-1}^{j}C_{j}^{i}u^{i}\varphi ^{j-i}\psi ^{(p-1)-j}\right]  \theta^{i^{2}+1} \sigma ^{j^{2}+1} \notag\\
&&\times\left( \frac{\theta ^{(i+1)^{2}}}{\theta
^{i^{2}+1}}\frac{\sigma ^{(j+1)^{2}}}{\sigma ^{j^{2}+1}}f\left( S\right)
+\frac{\sigma ^{(j+1)^{2}}}{\sigma ^{j^{2}+1}}g\left( S\right) +h\left(
S\right) \right) \,dx \notag\\
&=&\int_{\Omega }p\sum_{j=0}^{p-1}\sum_{i=0}^{j}\left[
C_{p-1}^{j}C_{j}^{i}u^{i}\varphi ^{j-i}\psi ^{(p-1)-j}\right] \theta^{i^{2}+1} \sigma^{j^{2}+1} \\
&&\times\left( \theta ^{2i}\sigma ^{2j}f\left(
S\right) +\sigma ^{2j}g\left( S\right) +h\left( S\right) \right)
\,dx. \notag
\end{eqnarray}%
In the light of the conditions \ref{a2}, \ref{a3} for some $\lambda _{1}\geq 
$ $\theta ^{2(p-1)}$, $\lambda _{2}\geq $ $\sigma ^{2(p-1)}$ and Lemma \ref%
{LIWS_lemma}, we get%
\begin{eqnarray*}
\mathcal{J} &\leq &\widetilde{K}\int_{\Omega
}p\sum_{j=0}^{p-1}\sum_{i=0}^{j}C_{p-1}^{j}C_{j}^{i}u^{i}\varphi ^{j-i}\psi
^{(p-1)-j}\Lambda(S)dx \\
&=&\widetilde{K}\int_{\Omega }p\sum_{j=0}^{p-1}\sum_{i=0}^{j}\theta ^{j-i}\sigma
^{p-j-1}C_{p-1}^{j}C_{j}^{i}\Lambda(S)u^{i}v^{j-i}w^{p-j-1}dx,
\end{eqnarray*}%
where $\widetilde{K}\geq 0$. Using a similar approach as in \cite{Kouachi2001}, it follows that there exist
positive constants $M_{1}$ and $M_{2}$ such that the functional $\widetilde{%
\mathcal{L}}$ satisfies the differential inequality 
\begin{equation}
\widetilde{\mathcal{L}}^{\prime }(t)\leq M_{1}\widetilde{\mathcal{L}}%
(t)+M_{2}\widetilde{\mathcal{L}}^{\frac{p-1}{p}}(t) . \label{L_inqD}
\end{equation}%
By setting $\mathcal{E=}\widetilde{\mathcal{L}}^{\frac{1}{p}}$, then \ref%
{L_inqD} can be rewritten as%
\begin{equation}
p\mathcal{E}^{\prime }(t)\leq M_{1}\mathcal{E}(t)+M_{2}.  \label{E_inqD}
\end{equation}%
Thus, a straightforward integration of (\ref{E_inqD}) yields a uniform bound
for the functional $\widetilde{\mathcal{L}}$ over the interval $[0,T^{\ast
}] $, with $T^{\ast }<T_{max}$, i.e., there exists a positive constant $%
\alpha $ such that 
\begin{equation}
\left( \theta \sigma \right) ^{p}\mathcal{L}(t)=\widetilde{\mathcal{L}}%
\left( t\right) \leq \alpha ,\ \forall t\in \lbrack 0,T^{\ast }],\ T^{\ast
}\in \left( 0,T_{max}\right) .
\end{equation}%
Hence, the functional $\mathcal{L}$ is upper bounded by $\beta :=\alpha
\left( \theta \sigma \right) ^{-p}$ on the interval $[0,T^{\ast}]$, with $T^{\ast }<T_{max}$.
\end{proof}
Now, since 
\begin{equation}
\Vert u(.,t)+v(.,t)+w(.,t)\Vert _{L^{p}(\Omega )}^{p}\leq \mathcal{L}%
(t),\quad \text{ on }\left[ 0,T_{max}\right),
\end{equation}%
by virtue of Proposition \ref{Prop1}, for any positive integer $p$, we
obtain 
\begin{equation}
 u+v+w \in L^{\infty }\left( \left( 0,T_{max}\right)
;L^{p}(\Omega )\right) .
\end{equation}%
Thus,
\begin{equation}
\Lambda \left( S\right) \in L^{\infty }\left( \left( 0,T_{max}\right)
;L^{p}(\Omega )\right) .  \label{uvp_bound}
\end{equation}%
On the other hand, from \ref{a4}, we get%
\begin{equation}
\left\{ 
\begin{array}{c}
\Vert f\left( S(.,t)\right) \Vert _{L^{q}(\Omega )}^{q}\leq
\gamma \Vert \Lambda\left( S(.,t)\right) \Vert _{L^{p}(\Omega )}^{p}, \\ \\
\Vert g\left( S(.,t)\right) \Vert _{L^{q}(\Omega )}^{q}\leq
\gamma \Vert \Lambda\left( S(.,t)\right)\Vert _{L^{p}(\Omega )}^{p}, \\ \\
\Vert h\left( S(.,t)\right) \Vert _{L^{q}(\Omega )}^{q}\leq
\gamma \Vert \Lambda\left( S(.,t)\right)\Vert _{L^{p}(\Omega )}^{p},%
\end{array}%
\right. \text{ on }\left( 0,T_{max}\right),
\label{FGH_q_bound}
\end{equation}%
where $q=\frac{p}{m}$ and $\gamma >0$. Finally, by combaining (\ref%
{FGH_q_bound}), (\ref{uvp_bound}), (\ref{Cond_p}) and using the $L^{p}$%
-regularity of the heat operator (see \cite{Kirane1983,Ladyzenskaja1968}),
we conclude that the system (\ref{main_system}) has a unique nonnegative
global classical solution.
\subsection{Theorem \ref{main_Theor}: Uniform-in-time bounds}
\label{Thrm1_UBt}
We first show that for any $1\leq p<\infty$, there exists a constant $C_p>0$ such that
	\begin{equation}\label{uvw_LpUniformUpperBound}
		\sup_{t\geq 0}\|u(.,t)\|_{L^{p}(\Omega)},\sup_{t\geq 0}\|v(.,t)\|_{L^{p}(\Omega)},\sup_{t\geq 0}\|w(.,t)\|_{L^{p}(\Omega)} \leq C_p.
	\end{equation}
    Indeed, by employing the $L^p$-energy function $\mathcal{L}(t)$ together with $\widetilde{\mathcal{L}}(t)=(\theta \sigma)^p \mathcal{L}(t)$, and carrying out computations analogous to those in Proposition~\ref{Prop1}, while taking into account the assumptions \ref{a2} and \ref{a3} with $K_i=0$, for all $i=1,\dots,4$, we obtain
\begin{equation}
    \widetilde{\mathcal{L}}^{\prime}(t)\leq 0.
\end{equation}
Thus, there exists a positive constant $\widehat{C}$, such that
\begin{equation*}
		\sup_{t\geq 0}\mathcal{L}(t) \leq \widehat{C}.
	\end{equation*}
Hence, we get \eqref{uvw_LpUniformUpperBound}. To finally see that the solution is bounded uniformly in time in sup norm, we use a smooth cut-off function $\psi: \mathbb{R} \to [0,1]$ with $\psi(s) = 0$ for $s\leq 0$ and $\psi(s) = 1$ for $s\geq 1$, $0\leq \psi' \leq M$, with $M>0$, and its shifted version $\psi_\tau(\cdot) = \psi(\cdot-\tau)$ for any $\tau\in \mathbb N$. Let $\tau\in\mathbb N$ be arbitrary. By multiplying the system \eqref{main_system} by $\psi_\tau$, we obtain
\begin{equation}
\left\{ 
\begin{array}{cc}
\displaystyle\frac{\partial (\psi_\tau u)}{\partial t}-d_{1}\Delta (\psi_\tau u)=\psi_\tau^{\prime}u+\psi_\tau f\left( u,v,w\right) , & \text{%
in }\Omega \times \left( \tau,\tau+\rho \right) , \\ \\
\displaystyle\frac{\partial (\psi_\tau v)}{\partial t}-d_{2}\Delta (\psi_\tau v)=\psi_\tau^{\prime}v+\psi_\tau g\left( u,v,w\right) , & \text{%
in }\Omega \times \left( \tau,\tau+\rho \right) , \\ \\
\displaystyle\frac{\partial (\psi_\tau w)}{\partial t}-d_{3}\Delta (\psi_\tau w)=\psi_\tau^{\prime}w+\psi_\tau h\left( u,v,w\right) , & \text{%
in }\Omega \times \left( \tau,\tau+\rho \right) , \\ \\
\displaystyle \frac{\partial (\psi_\tau u)}{\partial \eta }\left( x,t\right) =\frac{\partial (\psi_\tau v)}{%
\partial \eta }\left( x,t\right) =\frac{\partial (\psi_\tau w)}{\partial \eta }\left(
x,t\right) =0, & \text{on }\partial \Omega \times \left( \tau,\tau+\rho \right) ,
\\ \\
(\psi_\tau u)\left( x,\tau\right) = (\psi_\tau v)\left( x,\tau\right) =(\psi_\tau w)\left( x,\tau\right) =0, & \text{in }\Omega ,%
\end{array}%
\right.  \label{main_system_Cutoff}
\end{equation}
where $2\leq\rho\in \mathbb{N}$. Thanks to \eqref{uvw_LpUniformUpperBound} and the polynomial growth \ref{a4}, we have 
	\begin{equation*}
		\psi_\tau^{\prime}u+\psi_\tau f\left(S\right),\quad  \psi_\tau^{\prime}v+\psi_\tau g\left(S\right),\quad  \psi_\tau^{\prime}w+\psi_\tau h\left(S\right)\leq \widetilde{\Lambda}(S):=C (\Lambda(S))^m.
	\end{equation*}
	Again thanks to \eqref{uvw_LpUniformUpperBound} for any $1\leq p<\infty$, a constant $C_p>0$ exists such that
	\begin{equation*}
		\|\widetilde{\Lambda}(S)\|_{L^{p}(\Omega\times(\tau,\tau+\rho))} \leq C_p.
	\end{equation*}
	Therefore, by choosing $p>\frac{N+2}{2}$ and using the smooth effect of parabolic operator (see e.g. \cite[Proposition 3.1]{Nittka2014}),  we get
	\begin{equation*}
		\|\psi_\tau u\|_{L^{\infty}(\Omega\times(\tau,\tau+\rho))},\|\psi_\tau v\|_{L^{\infty}(\Omega\times(\tau,\tau+\rho))},\|\psi_\tau w\|_{L^{\infty}(\Omega\times(\tau,\tau+\rho))} \leq C,
	\end{equation*}
	where $C$ is a constant {\it independent of $\tau\in \mathbb N$}. Thanks to $\psi_\tau \geq 0$ and $\psi|_{(\tau+1,\tau+\rho)} \equiv 1$, we obtain finally the uniform-in-time bound
	\begin{equation*}
		\sup_{t\geq 0}\|u(.,t)\|_{L^{\infty}(\Omega)},\quad \sup_{t\geq 0}\|v(.,t)\|_{L^{\infty}(\Omega)},\quad \sup_{t\geq 0}\|w(.,t)\|_{L^{\infty}(\Omega)} \leq C.
	\end{equation*}

\subsection{Proof of Theorem \ref{main_Theor2}}
\label{Thrm2_Proof}
The key element of the proof of Theorem \ref{main_Theor2} is the following
Lyapunov functional.

\begin{proposition}
\label{Prop2} Let $p$, $\theta $, and $\sigma $
satisfy the same assumptions as in Proposition \ref{Prop1}. \ Assume that $%
f,g,h$ fulfill conditions \ref{a1}, \ref{a2}, and \ref{a3} for some $\lambda
_{1}\leq $ $\theta ^{-2p}$, $\lambda _{2}\leq $ $\sigma ^{-2p}$. Let $S $ be a maximal solution of system (\ref{main_system}), and let
\begin{equation*}
\mathcal{L}_{1}(t)=\int_{\Omega }\mathcal{H}_{p}^{\ast
}(S(x,t))dx,
\end{equation*}%
where%
\begin{equation*}
\mathcal{H}_{p}^{\ast
}(S)=\sum_{j=0}^{p}\sum_{i=0}^{j}C_{p}^{j}C_{j}^{i}\theta ^{\left(
p-i\right) ^{2}-i}\sigma ^{\left( p-j\right) ^{2}-j}u^{i}v^{j-i}w^{p-j}.
\end{equation*}%
Then, the functional $\mathcal{L}_{1}$ is uniformly bounded on the interval $%
[0,T^{\ast }]$, where $T^{\ast }<T_{max}$. \ 
\end{proposition}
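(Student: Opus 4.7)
\emph{Proof plan for Proposition~\ref{Prop2}.} The plan is to mirror the proof of Proposition~\ref{Prop1}, with a dualized Lyapunov functional and a rearrangement of the intermediate weighted sum estimates. As before I set $\varphi=\theta v$, $\psi=\theta\sigma w$ and $\widetilde{\mathcal{L}}_1(t)=(\theta\sigma)^p\mathcal{L}_1(t)$. A direct computation gives
\[
\widetilde{\mathcal{L}}_1(t)=\int_\Omega\widetilde{\mathcal{H}}_p^{*}(u,\varphi,\psi)\,dx,\qquad \widetilde{\mathcal{H}}_p^{*}=\sum_{j=0}^p\sum_{i=0}^j C_p^{j}C_j^{i}\,\theta^{(p-i)^2}\sigma^{(p-j)^2}u^{i}\varphi^{j-i}\psi^{p-j},
\]
so that Lemmas~\ref{LemmaD1Hp} and \ref{LemmaD2Hp} apply directly with the new sequences $\theta_i=\theta^{(p-i)^2}$, $\sigma_j=\sigma^{(p-j)^2}$. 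Differentiating in time, substituting the PDEs for $u,\varphi,\psi$ and using Green's formula splits $\widetilde{\mathcal{L}}_1'(t)=\mathcal{I}+\mathcal{J}$ exactly as in Proposition~\ref{Prop1}.

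For the diffusion term, the integrand of $\mathcal{I}$ is a quadratic form in $\nabla u,\nabla\varphi,\nabla\psi$ whose symmetric matrix $\mathcal{B}_{ij}^{*}$ has entries $d_{\bullet}\,\theta^{(p-i-s)^2}\sigma^{(p-j-t)^2}$, $s,t\in\{0,1,2\}$. The change of indices $\widetilde{i}=p-2-i$, $\widetilde{j}=p-2-j$ turns $\mathcal{B}_{ij}^{*}$ into the Proposition~\ref{Prop1} matrix $\mathcal{B}_{\widetilde{i}\widetilde{j}}$ up to the simultaneous permutation of its first and third rows and columns (which merely exchanges $d_1\leftrightarrow d_3$); since the critical exponent identity $2(k+1)^2-k^2-(k+2)^2=-2$ is invariant under $k\mapsto -k-2$, the $2\times 2$ and $3\times 3$ principal-minor conditions reduce to exactly~\eqref{Teta_Sigma_Conds}. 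A short manipulation rewrites~\eqref{Teta_Sigma_Conds} in the equivalent symmetric form $(\sigma^2-A_{23}^2)(\theta^2-A_{12}^2)>(A_{13}-A_{12}A_{23})^2$, from which $\sigma>A_{23}\geq 1$ is forced, so both $\theta,\sigma>1$ hold. Consequently $\mathcal{B}_{ij}^{*}$ is positive definite for every admissible $(i,j)$, and $\mathcal{I}\leq 0$.

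For the reaction term, Lemma~\ref{LemmaD1Hp} yields
\[
\mathcal{J}=\int_\Omega p\sum_{j=0}^{p-1}\sum_{i=0}^{j}C_{p-1}^{j}C_{j}^{i}\,\theta^{(p-i-1)^2}\sigma^{(p-j-1)^2}u^{i}\varphi^{j-i}\psi^{(p-1)-j}\bigl[f(S)+\theta^{2(p-i)}g(S)+\theta^{2(p-i)}\sigma^{2(p-j)}h(S)\bigr]dx,
\]
so the task is to prove, uniformly for $0\leq i\leq j\leq p-1$, the pointwise bound
\[
f(S)+\theta^{2(p-i)}g(S)+\theta^{2(p-i)}\sigma^{2(p-j)}h(S)\leq\widetilde{K}\,\Lambda(S).
\]
Setting $\alpha=\theta^{-2(p-i)}$ and $\beta=\sigma^{-2(p-j)}$, the hypotheses $\theta,\sigma>1$, $\lambda_1\leq\theta^{-2p}$ and $\lambda_2\leq\sigma^{-2p}$ place $\alpha\in[\lambda_1,1]$ and $\beta\in[\lambda_2,1]$, and after division the bound becomes $\alpha\beta f+\beta g+h\leq C\Lambda(S)$. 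I plan to obtain it by three applications of Lemma~\ref{LIWS_lemma} in the $\lambda_i<1$ regime: (i) from~\ref{a2} and the first line of~\eqref{EqCond_a3} (with $\Phi=f$, $\Psi=g+h$, $\alpha^{*}=\alpha$) get $\alpha f+g+h\leq C_1\Lambda$; (ii) viewing the second and third lines of~\eqref{EqCond_a3} as $\Phi+\Psi\leq K_3\Lambda$ and $\lambda_1\Phi+\Psi\leq K_4\Lambda$ with $\Phi=\lambda_2 f$, $\Psi=\lambda_2 g+h$, interpolate the coefficient of $\Phi$ down to $\alpha$ to obtain $\lambda_2(\alpha f+g)+h\leq C_2\Lambda$; (iii) apply Lemma~\ref{LIWS_lemma} once more to (i) and (ii), now with $\Phi=\alpha f+g$ and $\Psi=h$, to interpolate the coefficient of $\Phi$ down to $\beta$, producing $\beta(\alpha f+g)+h\leq C\Lambda$. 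Multiplying through by $\theta^{2(p-i)}\sigma^{2(p-j)}\leq\theta^{2p}\sigma^{2p}$ absorbs the overall factor into $\widetilde{K}$.

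With this estimate in hand, the remainder of the argument is identical to Proposition~\ref{Prop1}: reverting to $v,w$ and invoking the Kouachi-type reduction of~\cite{Kouachi2001} produces the differential inequality $\widetilde{\mathcal{L}}_1'(t)\leq M_1\widetilde{\mathcal{L}}_1(t)+M_2\widetilde{\mathcal{L}}_1^{(p-1)/p}(t)$, whose linearization $\mathcal{E}=\widetilde{\mathcal{L}}_1^{1/p}$ can be integrated explicitly to deliver the uniform bound on $[0,T^{*}]$ for every $T^{*}<T_{\max}$. I expect the main obstacle to be precisely the three-step reaction estimate above: the reversal from $\lambda_i>1$ in Proposition~\ref{Prop1} to $\lambda_i<1$ here forces every application of Lemma~\ref{LIWS_lemma} to interpolate downward, and the introduction of the auxiliary quantity $\alpha f+g$ is what allows one to couple the $\alpha$-interpolation (driven by $\lambda_1$) to the $\beta$-interpolation (driven by $\lambda_2$) in a single chain across the four inequalities provided by~\ref{a2}--\ref{a3}.
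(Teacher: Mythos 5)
Your proposal follows essentially the same route as the paper: the same substitution $\varphi=\theta v$, $\psi=\theta\sigma w$, the same splitting of $\widetilde{\mathcal{L}}_1'$ into a diffusion part handled by positive definiteness of the matrices $\mathcal{B}_{ij}^{*}$ under \eqref{Teta_Sigma_Conds} and a reaction part reduced to the weighted sums $\theta^{2(i-p)}\sigma^{2(j-p)}f+\sigma^{2(j-p)}g+h$, controlled via Lemma \ref{LIWS_lemma} and the hypotheses $\lambda_1\leq\theta^{-2p}$, $\lambda_2\leq\sigma^{-2p}$, followed by the same ODE argument. Your explicit three-step interpolation chain (and the observation that \eqref{Teta_Sigma_Conds} forces $\theta,\sigma>1$) correctly fills in details the paper leaves implicit, and is consistent with its intended argument.
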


\begin{proof}[Proof of Proposition \ref{Prop2}]
Set $\varphi (t,x)=\theta v(t,x)$, $\psi (t,x)=\theta \sigma w(t,x)$ and $%
\widetilde{\mathcal{L}}_{1}(t)=\left( \theta \sigma \right) ^{p}\mathcal{L}%
_{1}(t)$. We obtain%
\begin{equation*}
(\theta \sigma )^{p}\mathcal{H}_{p}^{\ast
}(S)=\sum_{j=0}^{p}\sum_{i=0}^{j}C_{p}^{j}C_{j}^{i}\theta ^{\left(
p-i\right) ^{2}}\sigma ^{\left( p-j\right) ^{2}}u^{i}\varphi ^{j-i}\psi
^{p-j}=:\widetilde{\mathcal{H}}_{p}^{\ast }(u,\varphi ,\psi ),
\end{equation*}%
where $\varphi (t,x)$ and $\psi (t,x)$ solve $\frac{\partial \varphi }{%
\partial t}-d_{2}\Delta \varphi =\theta g(u,v,w)$, $\frac{\partial \psi }{%
\partial t}-d_{3}\Delta \psi =\theta \sigma h(u,v,w)$, respectively.
Taking the derivative of $\widetilde{\mathcal{L}}$ with respect to $t$ gives%
\begin{eqnarray}
\widetilde{\mathcal{L}}_{1}^{\prime }(t) &=&\int_{\Omega }\left( \partial _{u}%
\widetilde{\mathcal{H}}_{p}^{\ast }(u,\varphi ,\psi )\frac{\partial u}{%
\partial t}+\partial _{\varphi }\widetilde{\mathcal{H}}_{p}^{\ast
}(u,\varphi ,\psi )\frac{\partial \varphi }{\partial t}+\partial _{\psi }%
\widetilde{\mathcal{H}}_{p}^{\ast }(u,\varphi ,\psi )\frac{\partial \psi }{%
\partial t}\right) \,dx \notag\\
&=&\int_{\Omega }\left( d_{1}\partial _{u}\widetilde{\mathcal{H}}_{p}^{\ast
}(u,\varphi ,\psi )\Delta u+d_{2}\partial _{\varphi }\widetilde{\mathcal{H}}%
_{p}^{\ast }(u,\varphi ,\psi )\Delta \varphi +d_{3}\partial _{\psi }%
\widetilde{\mathcal{H}}_{p}^{\ast }(u,\varphi ,\psi )\Delta \psi \right) dx  \notag\\
&&+\int_{\Omega }\left( f\left( S\right) \partial _{u}\widetilde{%
\mathcal{H}}_{p}^{\ast }(u,\varphi ,\psi )+\theta g(S)\partial _{\varphi
}\widetilde{\mathcal{H}}_{p}^{\ast }(u,\varphi ,\psi )+\theta \sigma
h(S)\partial _{\psi }\widetilde{\mathcal{H}}_{p}^{\ast }(u,\varphi ,\psi
)\right) dx \notag\\
&=&\mathcal{I}^{\ast }+\mathcal{J}^{\ast }.
\label{IJ2_Lyap_Func}
\end{eqnarray}%
On one hand, similar to the proof of Proposition \ref{Prop1}, we derive for
all $t\in \lbrack 0,T_{max})$%
\begin{equation*}
\mathcal{I}^{\ast }\mathcal{\leq }0.
\end{equation*}%
On the other hand, we have 
\begin{eqnarray*}
\mathcal{J}^{\ast } &=&\int_{\Omega }p\sum_{j=0}^{p-1}\sum_{i=0}^{j}\left[
C_{p-1}^{j}C_{j}^{i}u^{i}\varphi ^{j-i}\psi ^{(p-1)-j}\right]  \\
&&\times \left( \theta ^{(p-i-1)^{2}}\sigma ^{(p-j-1)^{2}}f\left(S\right) +
 \theta ^{(p-i)^{2}+1}\sigma ^{(p-j-1)^{2}}g\left(
S\right) + \theta^{(p-i)^{2}+1} \sigma^{(p-j)^{2}+1}h\left(
S\right) \right) \,dx \\
&=&\int_{\Omega }p\sum_{j=0}^{p-1}\sum_{i=0}^{j}\left[
C_{p-1}^{j}C_{j}^{i}u^{i}\varphi ^{j-i}\psi ^{(p-1)-j}\right] 
 \theta^{(p-i)^{2}+1} \sigma^{(p-j)^{2}+1} \\
&&\times \left( \frac{\theta ^{(p-i-1)^{2}}}{\theta ^{(p-i)^{2}+1}}\frac{%
\sigma ^{(p-j-1)^{2}}}{\sigma ^{(p-j)^{2}+1}}f\left(S\right) +\frac{%
\sigma ^{(p-j-1)^{2}}}{\sigma ^{(p-j)^{2}+1}}g\left( S\right) +h\left(
S\right) \right) \,dx \\
&=&\int_{\Omega }p\sum_{j=0}^{p-1}\sum_{i=0}^{j}\left[
C_{p-1}^{j}C_{j}^{i}u^{i}\varphi ^{j-i}\psi ^{(p-1)-j}\right] 
 \theta^{(p-i)^{2}+1} \sigma^{(p-j)^{2}+1} \\
&&\times \left( \theta ^{2(i-p)}\sigma ^{2(j-p)}f\left( S\right) +\sigma
^{2(j-p)}g\left( S\right) +h\left(S\right) \right) \,dx.
\end{eqnarray*}%
In the light of the conditions \ref{a2}, \ref{a3} for some $\lambda _{1}\leq 
$ $\theta ^{-2p}$, $\lambda _{2}\leq $ $\sigma ^{-2p}$ and using Lemma \ref%
{LIWS_lemma}, we get 
\begin{eqnarray*}
\mathcal{J}^{\ast } &\leq &\hat{K}\int_{\Omega
}p\sum_{j=0}^{p-1}\sum_{i=0}^{j}C_{p-1}^{j}C_{j}^{i}u^{i}\varphi ^{j-i}\psi
^{(p-1)-j}\Lambda(S)dx \\
&=&\hat{K}\int_{\Omega }p\sum_{j=0}^{p-1}\sum_{i=0}^{j}\theta ^{j-i}\sigma
^{p-j-1}C_{p-1}^{j}C_{j}^{i}\Lambda(S)u^{i}v^{j-i}w^{p-j-1}dx,
\end{eqnarray*}%
where $\hat{K}\geq 0$. Now, by following the remaining steps in the proof of
Proposition \ref{Prop1}, there exists a positive constant $\alpha ^{\ast }$,
such that 
\begin{equation}
\left( \theta \sigma \right) ^{p}\mathcal{L}_{1}(t)=\widetilde{\mathcal{L}}%
_{1}\left( t\right) \leq \alpha ^{\ast } ,\ \forall t\in \lbrack 0,T^{\ast }],\
T^{\ast }\in \left( 0,T_{max}\right) .
\end{equation}%
Hence, the functional $\mathcal{L}_{1}$ is upper bounded by $\beta ^{\ast
}:=\alpha ^{\ast }\left( \theta \sigma \right) ^{-p}$ on the interval $%
[0,T^{\ast }]$, with $T^{\ast }<T_{max}$.
\end{proof}
Now, since 
\begin{equation}
\Vert u(.,t)+v(.,t)+w(.,t)\Vert _{L^{p}(\Omega )}^{p}\leq \left( \theta
\sigma \right) ^{p}\mathcal{L}_{1}(t),\quad \text{ on }\left[
0,T_{max}\right),
\end{equation}%
by virtue of Proposition \ref{Prop2}, for any positive integer $p$, we
obtain 
\begin{equation}
 u+v+w\in L^{\infty }\left( \left( 0,T_{max}\right)
;L^{p}(\Omega )\right) .
\end{equation}%
Then, the rest of the proof is the same as in the proof of Theorem \ref{main_Theor}. Hence, we conclude that the system (\ref{main_system}) has a
unique nonnegative global classical solution.

\subsection{Generalizations: Proof of Theorems \ref{main_Theor3GBCs} and \ref{main_Theor4GBCs}.}
\label{Sub_sec_GeneralizationsProof}
\begin{proof}[Proof of Theorem \ref{main_Theor3GBCs}]
    The proof proceeds in a manner similar to that of Theorem \ref{main_Theor}, differing only in that
    \begin{equation}
        \widetilde{\mathcal{L}}^{\prime }(t)=\mathcal{I}+\widehat{\mathcal{I}}+\mathcal{J},
    \end{equation}
where $\mathcal{I}$ and $\mathcal{J}$ are the same as in \eqref{I_Lyap_Func} and \eqref{J_Lyap_Func}, respectively. The additional term is given by 
\begin{equation}
    \widehat{\mathcal{I}}=\displaystyle\int_{\partial \Omega }\left( d_{1}\partial _{u}\widetilde{\mathcal{H}}%
_{p}(u,\varphi ,\psi ) \frac{\partial u}{\partial \eta }+d_{2}\partial _{\varphi }\widetilde{\mathcal{H%
}}_{p}(u,\varphi ,\psi )\frac{\partial \varphi}{\partial \eta }  +d_{3}\partial _{\psi }\widetilde{\mathcal{H}}_{p}(u,\varphi ,\psi )\frac{\partial \psi}{\partial \eta } \right) dx.
\end{equation}
We now seek to show that, there exists $C\geq 0$, such that
\begin{equation}
\label{IBCs_Lyap_Func}
    \widehat{\mathcal{I}}\leq C.
\end{equation}
Indeed, we follow the steps as in \cite[Page 7]{Kouachi2002}. Thus, we get \eqref{IBCs_Lyap_Func} with $C>0$ for non-homogeneous or mixed homogeneous and non-homogeneous boundary conditions, as well as we have $C=0$ for homogeneous boundary conditions.
\end{proof}

\begin{proof}[Proof of Theorem \ref{main_Theor4GBCs}]
    The proof proceeds in a manner similar to that of Theorem \ref{main_Theor2}, differing only in that
    \begin{equation}
        \widetilde{\mathcal{L}}_{1}^{\prime }(t)=\mathcal{I}^{\ast}+\widehat{\mathcal{I}}^{\ast}+\mathcal{J}^{\ast},
    \end{equation}
where $\mathcal{I}^{\ast}$ and $\mathcal{J}^{\ast}$ are the same as in \eqref{IJ2_Lyap_Func}. The additional term is given by 
\begin{equation}
\widehat{\mathcal{I}}^{\ast}=\displaystyle\int_{\partial \Omega }\left( d_{1}\partial _{u}\widetilde{\mathcal{H}}_{p}^{\ast
}(u,\varphi ,\psi ) \frac{\partial u}{\partial \eta }+d_{2}\partial _{\varphi }\widetilde{\mathcal{H}}%
_{p}^{\ast }(u,\varphi ,\psi )\frac{\partial \varphi}{\partial \eta }  +d_{3}\partial _{\psi }%
\widetilde{\mathcal{H}}_{p}^{\ast }(u,\varphi ,\psi )\frac{\partial \psi}{\partial \eta } \right) dx.
\end{equation}

We now seek to show that, there exists $\widetilde{C}\geq 0$, such that
\begin{equation}
\label{IBCs_Lyap_Func1}
    \widehat{\mathcal{I}}^{\ast}\leq \widetilde{C}.
\end{equation}
Indeed, we follow the steps as in \cite[Page 7]{Kouachi2002}. Thus, we get \eqref{IBCs_Lyap_Func1} with $\widetilde{C}>0$ for non-homogeneous or mixed homogeneous and non-homogeneous boundary conditions, as well as we have $\widetilde{C}=0$ for homogeneous boundary conditions.
\end{proof}
\section{Applications}
\label{Sec_Applications}

\subsection{Example 1}
\label{Sec_Applications_Ex1}
Consider the example:%
\begin{equation}
\left\{ 
\begin{array}{cc}
\displaystyle\frac{\partial u}{\partial t}-d_{1}\Delta u=v^{l}-u^{q}, & \text{in }\Omega
\times \left( 0,+\infty \right) , \\ \\
\displaystyle\frac{\partial v}{\partial t}-d_{2}\Delta v=w^{r}-Bv^{l}, & \text{in }\Omega
\times \left( 0,+\infty \right) , \\ \\
\displaystyle\frac{\partial w}{\partial t}-d_{3}\Delta w=Au^{q}-Cw^{r}, & \text{in }%
\Omega \times \left( 0,+\infty \right) , \\ \\
\displaystyle\frac{\partial u}{\partial \eta }\left( x,t\right) =\frac{\partial v}{%
\partial \eta }\left( x,t\right) =\frac{\partial w}{\partial \eta }\left(
x,t\right) =0, & \text{on }\partial \Omega \times \left( 0,+\infty \right) ,
\\ \\
u\left( x,0\right) =u_{0}(x)\text{, }v\left( x,0\right) =v_{0}(x)\text{, }%
w\left( x,0\right) =w_{0}(x), & \text{in }\Omega ,%
\end{array}%
\right.  \label{Sys_Ex1}
\end{equation}
where $l,q,r\geq 1$, $A$,$B$, and $C$ are positive constants such that $%
A<\min \left\{ B,C\right\}$. The present example is inspired by the system of two equations considered by Kouachi in \cite[(4.11)-(4.12)]{Kouachi2001}.
We have the following result.
\begin{theorem}
Let $A\leq 1$, $B\geq \theta ^{2(p-1)}$, $C\geq \sigma ^{2(p-1)}$, such that 
$p>\frac{\left( N+2\right) }{2}\max \{l.q.r\}$, and $\theta ,\sigma $
satisfy $\left( \text{\ref{Teta_Sigma_Conds}}\right) $. Then for any $%
u_{0},v_{0},w_{0}\in L^{\infty }\left( \Omega ;\mathbb{R}_{+}\right) $,
there exists a unique nonnegative global classical solution to $\left( \text{%
\ref{Sys_Ex1}}\right) $. Moreover, the solution is bounded uniformly in time, i.e. 
\begin{equation}\label{uvw_LinftyUniformUpperBound_GBCsEX1}
		\sup_{t\geq 0}\|u(.,t)\|_{L^{\infty}(\Omega)},\quad\sup_{t\geq 0}\|v(.,t)\|_{L^{\infty}(\Omega)},\quad \sup_{t\geq 0}\|w(.,t)\|_{L^{\infty}(\Omega)} < +\infty.
\end{equation}
\end{theorem}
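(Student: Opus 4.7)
The plan is to verify directly that the nonlinearities
\[
f(\xi)=\xi_2^l-\xi_1^q, \qquad g(\xi)=\xi_3^r-B\xi_2^l, \qquad h(\xi)=A\xi_1^q-C\xi_3^r
\]
fulfill the hypotheses \ref{a1}--\ref{a4} with vanishing constants $K_1=K_2=K_3=K_4=0$ and the explicit admissible choices $\lambda_1=B$, $\lambda_2=C$. Once this verification is in place, Theorem~\ref{main_Theor} yields at once the existence of a unique nonnegative global classical solution together with the uniform-in-time $L^\infty$ bound \eqref{uvw_LinftyUniformUpperBound_GBCsEX1}, since all the constants in \ref{a2}--\ref{a3} vanish.

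The easy hypotheses are dispatched by inspection. Quasi-positivity \ref{a1} holds because $f(0,\xi_2,\xi_3)=\xi_2^l\ge 0$, $g(\xi_1,0,\xi_3)=\xi_3^r\ge 0$, and $h(\xi_1,\xi_2,0)=A\xi_1^q\ge 0$, while local Lipschitz continuity is automatic for polynomial reactions. Polynomial growth \ref{a4} is clear with $m=\max\{l,q,r\}$, and the hypothesis $p>\frac{(N+2)}{2}\max\{l,q,r\}$ is exactly \eqref{Cond_p}. Noting that $\theta>A_{12}\ge 1$ by AM--GM, one has $B\ge\theta^{2(p-1)}\ge 1$; and since the lower bounds $B\ge\theta^{2(p-1)}$, $C\ge\sigma^{2(p-1)}$ only grow stronger when $B,C$ are enlarged, we may assume without loss of generality that $B,C\ge 1$.

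Then I would verify \ref{a2} and \ref{a3} by a single direct computation of the four weighted sums:
\begin{align*}
f+g+h &= (A-1)\xi_1^q+(1-B)\xi_2^l+(1-C)\xi_3^r,\\
\lambda_1 f+g+h &= (A-B)\xi_1^q+(1-C)\xi_3^r,\\
\lambda_2 f+\lambda_2 g+h &= (A-C)\xi_1^q+C(1-B)\xi_2^l,\\
\lambda_1\lambda_2 f+\lambda_2 g+h &= (A-BC)\xi_1^q.
\end{align*}
Each expression is nonpositive on $\mathbb{R}_+^3$ by virtue of $A\le 1\le B,C$, so \ref{a2} holds with $K_1=0$ and the three inequalities of \ref{a3} hold with $K_2=K_3=K_4=0$. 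The conclusion then follows immediately by invoking Theorem~\ref{main_Theor}. There is essentially no obstacle here: the calculation is a transparent polynomial identity, and the only mild point to keep in mind is the distinction between the raw lower bounds on $B,C$ and the sharper requirement $B,C\ge 1$ needed for the four sums to be genuinely nonpositive, which is resolved by the freedom to enlarge $B$ and $C$.
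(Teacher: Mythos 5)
Your proposal is correct and follows the same route as the paper, whose proof simply asserts that the hypotheses of Theorem \ref{main_Theor} are fulfilled and invokes it; you additionally spell out the verification with $\lambda_1=B$, $\lambda_2=C$, $m=\max\{l,q,r\}$ and $K_1=\dots=K_4=0$, and the four weighted sums are computed correctly. The only cosmetic point is that ``enlarging $B,C$'' is neither needed nor really available (they are given data of the system): condition \eqref{Teta_Sigma_Conds} already forces $\sigma>1$ as well as $\theta>A_{12}\ge 1$, so $B\ge\theta^{2(p-1)}>1$ and $C\ge\sigma^{2(p-1)}>1$ follow directly from the stated hypotheses.
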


\begin{proof}
As all the assumptions of Theorem \ref{main_Theor} are fulfilled. Then, the
global existence and uniform boundedness in time of the solution to system $\left( \text{\ref{Sys_Ex1}}%
\right) $ follows directly.
\end{proof}

\subsection{Example 2}
\label{Sec_Applications_Ex2}
Consider the following generalized example: 
\begin{equation}
\left\{ 
\begin{array}{ll}
\displaystyle\frac{\partial u}{\partial t}-d_{1}\Delta u=f\left( S\right) & \text{in }%
\Omega \times \left( 0,\infty \right) ,\smallskip \\ \\
\displaystyle \frac{\partial v}{\partial t}-d_{2}\Delta v=g\left( S\right) & \text{in }%
\Omega \times \left( 0,\infty \right) ,\smallskip \\ \\
\displaystyle \frac{\partial w}{\partial t}-d_{3}\Delta w=h\left(S\right) & \text{in }%
\Omega \times \left( 0,\infty \right) ,\smallskip \\ \\
\displaystyle\frac{\partial u}{\partial \eta }\left( x,t\right) =\frac{\partial v}{%
\partial \eta }\left( x,t\right) =\frac{\partial w}{\partial \eta }\left(
x,t\right) =0 & \text{on }\partial \Omega \times \left( 0,\infty \right)
,\smallskip \\ \\
\begin{array}{l}
u\left( x,0\right) =u_{0}(x)\geqslant 0\text{, }v\left( x,0\right)
=v_{0}(x)\geqslant 0, \\ 
w\left( x,0\right) =w_{0}(x)\geqslant 0%
\end{array}
& \text{in }\Omega ,%
\end{array}%
\right.  \label{Sys_Ex2}
\end{equation}%
where,%
\begin{equation}
\left\{ 
\begin{array}{c}
f(S):=\Psi _{2}(S)-\Psi _{1}(S)-s_{1}\Psi _{3}(S), \\ \\
g(S):=\Psi _{3}(S)-\zeta _{2}\Psi _{2}(S)-s_{2}\Psi _{1}(S),
\\ \\
h(S):=\zeta _{1}\Psi _{1}(S)-\zeta _{3}\Psi _{3}(S)-s_{3}\Psi_{2}(S),
\end{array}%
\right.  \label{ReacFuncs_Ex2}
\end{equation}
subject to the following constraints 
\begin{equation}
s_{1},s_{2},s_{3}\geq 0,\text{ }1\geq \zeta _{1} . \label{Cond1_Ex2}
\end{equation}%
\begin{equation}
\Psi _{1},\Psi _{2},\Psi _{3}\in C^{1}\left( \mathbb{R}_{+}^{3};\mathbb{R}%
_{+}\right)  .\label{Cond2_Ex2}
\end{equation}%

\begin{equation}
\left\{ 
\begin{array}{c}
\Psi _{1}(0,v,w)=\Psi _{2}(0,v,w)=\Psi _{3}(0,v,w)=0,\text{ for all }
(v,w)\in \mathbb{R}_{+}^{2}, \\ \\
\Psi _{1}(u,0,w)=\Psi _{2}(u,0,w)=\Psi _{3}(u,0,w)=0,\text{ for all }
(u,w)\in \mathbb{R}_{+}^{2}, \\ \\
\Psi _{1}(u,v,0)=\Psi _{2}(u,v,0)=\Psi _{3}(u,v,0)=0,\text{ for all }
(u,v)\in \mathbb{R}_{+}^{2}.%
\end{array}%
\right.  \label{Cond3_Ex2}
\end{equation}
There exist $C>0$ and $m\in \mathbb{N}$, such that
\begin{equation}
\Psi _{1}(S),\Psi
_{2}(S),\Psi _{3}(S)\leq C\Lambda(S)^{m},\ \text{ for all } S\in \mathbb{%
R}_{+}^{3}.  \label{Cond4_Ex2}
\end{equation}%
\begin{eqnarray}
p &=&\min \left\{ l\in \mathbb{N}\ :\ l>\frac{m\left( N+2\right) }{2}%
\right\} ,  \label{Cond5_Ex2} \\
\text{ }\zeta _{2} &>&\lambda _{1}\geqslant \theta ^{2\left( p-1\right) },%
\text{ }\zeta _{3}>\lambda _{2}\geqslant \sigma ^{2\left( p-1\right) }. 
\notag
\end{eqnarray}

We have the following result.

\begin{theorem}
According to the assumptions $\left( \text{\ref{Cond1_Ex2}}\right) $-$\left( 
\text{\ref{Cond5_Ex2}}\right) $ where $\left( \text{\ref{Teta_Sigma_Conds}}\right) $ holds, and for any $u_{0},v_{0},w_{0}\in L^{\infty
}\left( \Omega ;\mathbb{R}_{+}\right) $, there exists a unique nonnegative
global classical solution to the system (\ref{Sys_Ex2}). Moreover, the solution is bounded uniformly in time, i.e. 
\begin{equation}\label{uvw_LinftyUniformUpperBound_GBCsEX2}
		\sup_{t\geq 0}\|u(.,t)\|_{L^{\infty}(\Omega)},\quad\sup_{t\geq 0}\|v(.,t)\|_{L^{\infty}(\Omega)},\quad \sup_{t\geq 0}\|w(.,t)\|_{L^{\infty}(\Omega)} < +\infty.
\end{equation}
\end{theorem}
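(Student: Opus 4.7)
The plan is to derive the theorem directly from Theorem~\ref{main_Theor} by showing that, under (\ref{Cond1_Ex2})--(\ref{Cond5_Ex2}), the reaction terms (\ref{ReacFuncs_Ex2}) satisfy assumptions \ref{a1}--\ref{a4} with $K_1=K_2=K_3=K_4=0$; this is precisely what is required for both global existence and the uniform-in-time bound in Theorem~\ref{main_Theor}. The whole argument therefore collapses to algebraic bookkeeping on the coefficients of linear combinations of $\Psi_1,\Psi_2,\Psi_3$, and no genuine analytic obstacle appears. The only nontrivial point is the simultaneous verification of \ref{a2} and \ref{a3}, which must be done with care so that all four constants $K_i$ indeed vanish.

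I would first dispose of the easy hypotheses. Quasi-positivity (part of \ref{a1}) is immediate from (\ref{Cond3_Ex2}): $f(0,v,w)=\Psi_2(0,v,w)-\Psi_1(0,v,w)-s_1\Psi_3(0,v,w)=0$, and analogous vanishing holds for $g(u,0,w)$ and $h(u,v,0)$; local Lipschitz continuity follows from $\Psi_i\in C^1$ via (\ref{Cond2_Ex2}). For \ref{a4}, discarding the nonpositive contributions gives $f(S)\le \Psi_2(S)$, $g(S)\le \Psi_3(S)$, and $h(S)\le \zeta_1\Psi_1(S)\le \Psi_1(S)$, so (\ref{Cond4_Ex2}) yields polynomial growth with the same exponent $m$ built into $p$ through (\ref{Cond5_Ex2}), matching (\ref{Cond_p}).

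The substantive step is \ref{a2} and \ref{a3}. Direct substitution yields
\begin{equation*}
f(S)+g(S)+h(S)=(\zeta_1-1-s_2)\Psi_1(S)+(1-\zeta_2-s_3)\Psi_2(S)+(1-\zeta_3-s_1)\Psi_3(S),
\end{equation*}
which is $\le 0$ thanks to $\zeta_1\le 1$ from (\ref{Cond1_Ex2}) and $\zeta_2>\lambda_1\ge \theta^{2(p-1)}\ge 1$, $\zeta_3>\lambda_2\ge \sigma^{2(p-1)}\ge 1$ from (\ref{Cond5_Ex2}) together with $\theta>A_{12}\ge 1$ in (\ref{Teta_Sigma_Conds}); so \ref{a2} holds with $K_1=0$. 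Each of the three weighted combinations in \ref{a3} expands as $\alpha_1\Psi_1+\alpha_2\Psi_2+\alpha_3\Psi_3$ with coefficients
\begin{align*}
\lambda_1 f+g+h:\quad & \alpha_1=\zeta_1-\lambda_1-s_2,\ \alpha_2=\lambda_1-\zeta_2-s_3,\ \alpha_3=1-\zeta_3-\lambda_1 s_1,\\
\lambda_2 f+\lambda_2 g+h:\quad & \alpha_1=\zeta_1-\lambda_2-\lambda_2 s_2,\ \alpha_2=\lambda_2(1-\zeta_2)-s_3,\ \alpha_3=\lambda_2(1-s_1)-\zeta_3,\\
\lambda_1\lambda_2 f+\lambda_2 g+h:\quad & \alpha_1=\zeta_1-\lambda_1\lambda_2-\lambda_2 s_2,\ \alpha_2=\lambda_2(\lambda_1-\zeta_2)-s_3,\ \alpha_3=\lambda_2(1-\lambda_1 s_1)-\zeta_3.
\end{align*}
The comparisons $\zeta_1\le 1\le \lambda_1\le \lambda_1\lambda_2$, $\zeta_2>\lambda_1$, and $\zeta_3>\lambda_2$, together with $s_i\ge 0$, force every $\alpha_j\le 0$; the only place deserving a brief case split is $\alpha_3$ in the last two rows, where if $1-s_1$ (resp.\ $1-\lambda_1 s_1$) is nonpositive the bound is trivial, while otherwise one uses $\lambda_2(1-s_1)\le \lambda_2<\zeta_3$ (resp.\ $\lambda_2(1-\lambda_1 s_1)\le \lambda_2<\zeta_3$). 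Hence \ref{a3} holds with $K_2=K_3=K_4=0$.

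With \ref{a1}--\ref{a4} verified, $p$ matching (\ref{Cond_p}), (\ref{Teta_Sigma_Conds}) in force, and $u_0,v_0,w_0\in L^\infty(\Omega;\mathbb{R}_+)$, Theorem~\ref{main_Theor} produces a unique nonnegative global classical solution. Because all four $K_i$ vanish and the boundary conditions in (\ref{Sys_Ex2}) are homogeneous Neumann, the second conclusion of Theorem~\ref{main_Theor} delivers the uniform-in-time bound (\ref{uvw_LinftyUniformUpperBound_GBCsEX2}).
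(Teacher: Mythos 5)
Your proposal is correct and follows exactly the route the paper takes: reduce the theorem to Theorem~\ref{main_Theor} by checking \ref{a1}--\ref{a4} with $K_1=\dots=K_4=0$, which the paper merely asserts ``can be readily verified'' while you spell out the coefficient computations (and they check out, since $\zeta_1\le 1\le\lambda_1\le\lambda_1\lambda_2$, $\zeta_2>\lambda_1$, $\zeta_3>\lambda_2>1$ and $s_i\ge 0$ make every coefficient of $\Psi_1,\Psi_2,\Psi_3$ nonpositive in all four sums). No further comment is needed.
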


\begin{proof}
The reaction functions $\left( \text{\ref{ReacFuncs_Ex2}}\right) $, under
the conditions $\left( \text{\ref{Cond1_Ex2}}\right) $-$\left( \text{\ref%
{Cond5_Ex2}}\right) $, can be readily verified to satisfy the assumptions of
Theorem \ref{main_Theor}. Consequently, the global existence and uniform boundedness in time of the solution to system $\left( \text{\ref{Sys_Ex2}}\right) $ are established.
\end{proof}

\subsection{Example 3: HOIs three-species Lotka-Voltera model}
\label{Sec_Applications_Ex3}

In contrast to the previous examples, where the parameters of the systems
are dependent on the spatial dimension, the growth polynomial's order, and
the diffusion coefficients, the current example admits parameters that remain independent of these quantities. A direct
application of Theorems \ref{main_Theor3GBCs} and \ref{main_Theor4GBCs} demonstrates
the existence of global classical solution in arbitrary spatial dimensions
for nonlinear three-species sub-skew-symmetric Lotka-Volterra involving
higher order interactions and linear diffusion, expressed in the following
form: 
\begin{equation}
\left\{ 
\begin{array}{cc}
\vspace{0.2cm}
\displaystyle \frac{\partial u}{\partial t}-d_{1}\Delta u=\tau _{1}u+u^{\gamma _{1}}\left(
a_{11}u^{\gamma _{1}}+a_{12}v^{\gamma _{2}}+a_{13}w^{\gamma _{3}}\right) , & 
\text{in }\Omega \times \left( 0,+\infty \right) , \\ \vspace{0.2cm}
\displaystyle\frac{\partial v}{\partial t}-d_{2}\Delta v=\tau _{2}v+v^{\gamma _{2}}\left(
a_{21}u^{\gamma _{1}}+a_{22}v^{\gamma _{2}}+a_{23}w^{\gamma _{3}}\right) , & 
\text{in }\Omega \times \left( 0,+\infty \right) , \\ 
\displaystyle\frac{\partial w}{\partial t}-d_{3}\Delta w=\tau _{3}w+w^{\gamma _{3}}\left(
a_{31}u^{\gamma _{1}}+a_{32}v^{\gamma _{2}}+a_{33}w^{\gamma _{3}}\right) , & 
\text{in }\Omega \times \left( 0,+\infty \right) , \\ 
\begin{array}{c}
\displaystyle\lambda _{1}u(x,t)+\left( 1-\lambda _{1}\right) \frac{\partial u}{\partial
\eta }\left( x,t\right) =\beta _{1}, \\ 
\displaystyle\lambda _{2}v(x,t)+\left( 1-\lambda _{2}\right) \frac{\partial v}{\partial
\eta }\left( x,t\right) =\beta _{2}, \\ 
\displaystyle\lambda _{3}w(x,t)+\left( 1-\lambda _{3}\right) \frac{\partial w}{\partial
\eta }\left( x,t\right) =\beta _{3},%
\end{array}
& \text{on }\partial \Omega \times \left( 0,+\infty \right) , \\ 
u\left( x,0\right) =u_{0}(x)\text{, }v\left( x,0\right) =v_{0}(x)\text{, }%
w\left( x,0\right) =w_{0}(x), & \text{in }\Omega ,%
\end{array}%
\right.   \label{3SHOIs_LV}
\end{equation}
where $A=(a_{ij})\in 
\mathbb{R}
^{3\times 3}$ is sub-skew-symmetric matrix, i.e. $A+A^{T}\leq 0$ where all
elements of $A+A^{T}$ are non-positive, $(\tau _{1},\tau _{2},\tau _{3})\in 
\mathbb{R}
^{3}$, and $\gamma _{i}\geq 1$ for $i=1,2,3$. The boundary conditions are
defined as \eqref{HNBCs_EX3}-\eqref{MixNHRBCsHDBCs_EX3}.

The diffusive Lotka-Volterra model with higher order interactions $\left( 
\text{\ref{3SHOIs_LV}}\right) $, is inspired by the works \cite{AlAdwani2019,AlAdwani2020,Desvillettes2014,Desvillettes2015,Suzuki2013,Vandermeer1969}. The model $\left( \text{\ref{3SHOIs_LV}}\right) $ with $\gamma
_{i}=1$ for $i=1.2.3$, and homogeneous Neumann or Dirichlet boundary
conditions, have been extensively investigated in the literature due to
their wide application in biology, see e.g. \cite{Henry1981,PierreSuzuki2019,Rothe1984,Suzuki2015}. All
the previous mentioned works concern either weak solutions in all space
dimensions or classical solutions in small (one and two) space dimensions.
Until recently, Fellner \textit{et al. }\cite{FellnerMorganTang2020,FellnerMorganTang2021}
dealt with the case of skew-symmetric Lotka-Volterra systems (i.e. $A+A^{T}=0$%
) with $m-$equations and  $2\leq m\in 
\mathbb{N}
$, their results imply the existence of classical solution in all space
dimensions. It is worth noting that their results cannot guarantee
the global existence of solutions to the system $\left( \text{\ref{3SHOIs_LV}%
}\right) $ with nonhomogeneous Robin boundary conditions, even at least for $%
\gamma _{i}=1$, for $i=1.2.3$. The outcomes of our study ensure the global
existence of classical solutions of a class of $\left( \text{\ref{3SHOIs_LV}}\right) $,
where these systems hold for a wide range of boundary conditions $\left( 
\text{\ref{HNBCs_EX3}}\right) $-$\left( \text{\ref{MixNHRBCsHDBCs_EX3}}%
\right) $, and $\gamma _{i}\geq 1$ for $i=1.2.3$. In order to simplify the
notations, we define%
\begin{equation*}
\mathcal{SK}_{3\times 3}=\left\{ A=(a_{ij})_{1\leq i,j\leq 3}\in 
\mathbb{R}
^{3\times 3}\ :\ A+A^{T}\leq 0\right\} ,
\end{equation*}

\begin{equation*}
\mathcal{SK}_{3\times 3}^{+}=\left\{ A\in \mathcal{SK}_{3\times 3}:\text{ }%
a_{ij}\geq 0\text{ with }i<j\text{, for }i,j=1,2,3\right\} ,
\end{equation*}

and

\begin{equation*}
\mathcal{SK}_{3\times 3}^{-}=\left\{ A\in \mathcal{SK}_{3\times
3}:a_{ij}\leq 0\text{ with }i<j\text{, for }i,j=1,2,3\right\} .
\end{equation*}

By employing our aforementioned approach, we can deduce the following result.

\begin{theorem}[HOIs 3-species Lotka-Voltera model]
Let $N\in\mathbb{N}$, and $\Omega \subset \mathbb{R}^{N}$ is a bounded domain with smooth boundary $\partial \Omega $, where $\tau
_{i}\in\mathbb{R}$, and $\gamma _{i}\geq 1$ for $i=1,2,3$. Additionally, let $A\in \mathcal{SK%
}_{3\times 3}^{-}$ or $A\in \mathcal{SK}_{3\times 3}^{+}$. Then for any
non-negative initial conditions $u_{0},v_{0},w_{0}\in L^{\infty }\left(
\Omega \right) $, and when one of the boundary conditions $\left( \text{\ref%
{HNBCs_EX3}}\right) $-$\left( \text{\ref{MixNHRBCsHDBCs_EX3}}\right) $ is
imposed, there exists a unique nonnegative global classical solution to $%
\left( \text{\ref{3SHOIs_LV}}\right) $.
Moreover, if $\tau_i\leq 0$ for all $i=1,2,3$ and in \eqref{NHRBCs_EX3} or \eqref{MixNHRBCsHDBCs_EX3} we have $\beta_i\equiv 0$ for $i=1,2,3$, then the solution is bounded uniformly in time, i.e. 
\begin{equation}\label{uvw_LinftyUniformUpperBound_GBCsEX3}
		\sup_{t\geq 0}\|u(.,t)\|_{L^{\infty}(\Omega)},\quad\sup_{t\geq 0}\|v(.,t)\|_{L^{\infty}(\Omega)},\quad \sup_{t\geq 0}\|w(.,t)\|_{L^{\infty}(\Omega)} < +\infty.
\end{equation}
\end{theorem}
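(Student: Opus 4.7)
The plan is to verify the hypotheses of Theorems \ref{main_Theor3GBCs} and \ref{main_Theor4GBCs} for system \eqref{3SHOIs_LV}, splitting into cases according to the sign of the upper triangle of $A$. Writing $U := u^{\gamma_1}$, $V := v^{\gamma_2}$, $W := w^{\gamma_3}$, the reaction terms become $f(S) = \tau_1 u + U(a_{11}U+a_{12}V+a_{13}W)$ and analogously for $g, h$. Quasi-positivity and local Lipschitz continuity \ref{a1} hold because $\gamma_i \geq 1$, and the polynomial growth \ref{a4} holds with $m = 2\max_i \gamma_i$. For the mass control \ref{a2}, direct summation yields
\[
f(S)+g(S)+h(S) = \tau_1 u + \tau_2 v + \tau_3 w + \tfrac{1}{2}(U,V,W)(A+A^T)(U,V,W)^T,
\]
and the entrywise non-positivity of $A+A^T$ together with $U,V,W \geq 0$ renders the quadratic form non-positive; hence \ref{a2} holds with $K_1 = \max(0,\tau_1,\tau_2,\tau_3)$, and $K_1 = 0$ whenever $\tau_i \leq 0$ for all $i$.

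The crux is to verify the linear intermediate weighted sum condition \ref{a3}. In the case $A \in \mathcal{SK}_{3\times 3}^-$, I invoke Theorem \ref{main_Theor3GBCs} and choose $\lambda_1 = \max\{\theta^{2(p-1)},1\}$ and $\lambda_2 = \max\{\sigma^{2(p-1)},1\}$, both $\geq 1$. The key observation is that every off-diagonal coefficient appearing in the quadratic parts of $\lambda_1 f + g + h$, $\lambda_2 f + \lambda_2 g + h$, and $\lambda_1\lambda_2 f + \lambda_2 g + h$ has the form $a_{ij}\lambda + a_{ji}$ (or a symmetric analogue) with multiplier $\lambda \geq 1$; since $a_{ij} \leq 0$ for $i<j$, this is dominated by $a_{ij}+a_{ji} \leq 0$. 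The diagonal coefficients inherit the sign of $a_{ii} \leq 0$. Hence each quadratic part is non-positive, so each weighted sum is bounded above by its linear part alone, thus by $K_j\Lambda(S)$. In the opposite case $A \in \mathcal{SK}_{3\times 3}^+$, I apply Theorem \ref{main_Theor4GBCs} with $\lambda_1 = \min\{\theta^{-2p},1\}$ and $\lambda_2 = \min\{\sigma^{-2p},1\}$, both $\leq 1$; now $a_{ij} \geq 0$ for $i<j$ together with multipliers $\leq 1$ again gives $a_{ij}\lambda + a_{ji} \leq a_{ij}+a_{ji} \leq 0$, and the same conclusion follows. This sign-case analysis is the main obstacle, and is the only place where the structural hypothesis on $A$ is essential.

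For the uniform-in-time estimate, assume additionally $\tau_i \leq 0$ for all $i$ and, whenever a Robin-type boundary condition is imposed, $\beta_i \equiv 0$. Then the linear contributions $\lambda^{\ast}\tau_j$ appearing in all three weighted sums of \ref{a3} are non-positive, which combined with the non-positive quadratic part yields $K_j = 0$ for $j=1,\dots,4$. The bound \eqref{uvw_LinftyUniformUpperBound_GBCsEX3} is then a direct consequence of the uniform boundedness clause in Theorem \ref{main_Theor3GBCs} (respectively \ref{main_Theor4GBCs}), selected according to the sign case for $A$.
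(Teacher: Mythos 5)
Your proposal is correct and follows essentially the same route as the paper: verify \ref{a1}, \ref{a2} (via the entrywise non-positivity of $A+A^{T}$), and \ref{a4}, then reduce \ref{a3} to the sign conditions $\lambda a_{ij}+a_{ji}\leq 0$ on the off-diagonal coefficients of the three weighted sums, handled by the two cases $A\in\mathcal{SK}_{3\times 3}^{-}$ with $\lambda_1,\lambda_2\geq 1$ (Theorem \ref{main_Theor3GBCs}) and $A\in\mathcal{SK}_{3\times 3}^{+}$ with $\lambda_1,\lambda_2\leq 1$ (Theorem \ref{main_Theor4GBCs}), before invoking the uniform-boundedness clause when $\tau_i\leq 0$ and $\beta_i\equiv 0$. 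Your treatment of the diagonal terms and of the linear contributions $\tau_i$ is in fact slightly more explicit than the paper's, but the argument is the same.
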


\begin{proof}
Set 
\begin{equation}  \label{RFs_EX3proof}
\left\{ 
\begin{array}{c}
\widetilde{f}(S):=u^{\gamma _{1}}\left(
a_{11}u^{\gamma _{1}}+a_{12}v^{\gamma _{2}}+a_{13}w^{\gamma _{3}}\right) ,
\\ 
\widetilde{g}(S):=v^{\gamma _{2}}\left(
a_{21}u^{\gamma _{1}}+a_{22}v^{\gamma _{2}}+a_{23}w^{\gamma _{3}}\right) ,
\\ 
\widetilde{h}(S):=w^{\gamma _{3}}\left(
a_{31}u^{\gamma _{1}}+a_{32}v^{\gamma _{2}}+a_{33}w^{\gamma _{3}}\right) ,%
\end{array}%
\right.
\end{equation}

and%
\begin{equation*}
\left\{ 
\begin{array}{c}
\widetilde{\mathcal{C}}_{1}(S):=\lambda _{1}\widetilde{f}(S)+%
\widetilde{g}(S)+\widetilde{h}(S), \\ 
\widetilde{\mathcal{C}}_{2}(S):=\lambda _{2}\widetilde{f}(S)+\lambda
_{2}\widetilde{g}(S)+\widetilde{h}(S), \\ 
\widetilde{\mathcal{C}}_{3}(S):=\lambda _{1}\lambda _{2}f(S)+\lambda
_{2}g(S)+h(S).
\end{array}%
\right.
\end{equation*}%
The verification of the assumptions \ref{a1}\ and \ref{a4} is
straightforward. Since $A\in \mathcal{SK}_{3\times 3}^{-}\subset \mathcal{SK}
$ or $A\in \mathcal{SK}_{3\times 3}^{+}\subset \mathcal{SK}$, thus the
assumption \ref{a2} is easily obtained. Now, by substituting $\left( \text{%
\ref{RFs_EX3proof}}\right) $ into the left-hand side of the inequalities $%
\left( \text{\ref{EqCond_a3}}\right) $, we get%
\begin{equation*}
\left\{ 
\begin{array}{c}
\widetilde{\mathcal{C}}_{1}(S)\leq \left( \allowbreak \lambda
_{1}a_{12}+a_{21}\right) u^{\gamma _{1}}v^{\gamma _{2}}+\left( \allowbreak
\lambda _{1}a_{13}+a_{31}\right) u^{\gamma _{1}}w^{\gamma _{3}}, \\ 
\widetilde{\mathcal{C}}_{2}(S)\leq \left( \lambda _{2}a_{13}+\allowbreak
a_{31}\right) u^{\gamma _{1}}w^{\gamma _{3}}+\left( \lambda
_{2}a_{23}+a_{32}\right) v^{\gamma _{2}}w^{\gamma _{3}}, \\ 
\widetilde{\mathcal{C}}_{3}(S)\leq \left( \lambda _{1}\lambda
_{2}a_{13}+a_{31}\right) u^{\gamma _{1}}w^{\gamma _{3}}+\left( \lambda
_{1}a_{12}+a_{21}\right) u^{\gamma _{1}}v^{\gamma _{2}}\lambda _{2}+\left(
\lambda _{2}a_{23}+a_{32}\right) v^{\gamma _{2}}w^{\gamma _{3}}.%
\end{array}%
\right.
\end{equation*}%
Therefore, in order for the following inequalities to be achieved (according to the non-negativity of each component of \( S \))%
\begin{equation*}
\left\{ 
\begin{array}{c}
\widetilde{\mathcal{C}}_{1}(S)\leq 0, \\ 
\widetilde{\mathcal{C}}_{2}(S)\leq 0, \\ 
\widetilde{\mathcal{C}}_{3}(S)\leq 0,
\end{array}%
\right.
\end{equation*}%
it is enough to verify that 
\begin{equation}
\left\{ \allowbreak 
\begin{array}{c}
\allowbreak \lambda _{1}a_{12}+a_{21}\leq 0, \\ 
\allowbreak \lambda _{1}a_{13}+a_{31}\leq 0, \\ 
\lambda _{2}a_{13}+\allowbreak a_{31}\leq 0, \\ 
\lambda _{2}a_{23}+a_{32}\leq 0, \\ 
\lambda _{1}\lambda _{2}a_{13}+a_{31}\leq 0.%
\end{array}%
\right.  \label{AcondEX3}
\end{equation}%
So we distinguish two cases:

\begin{itemize}
\item When $A\in \mathcal{SK}_{3\times 3}^{-}:$ It hold that $a_{ij}\leq 0$
with $i<j$, and $a_{ij}+a_{ji}\leq 0$, for $i,j=1,2,3$. Consequently,
choosing any values $\lambda _{1},\lambda _{2}>1$ is sufficient to ensure
that \eqref{AcondEX3} is satisfied, and thus the assumption \ref{a3}. Hence, by applying
Theorem \ref{main_Theor3GBCs}, the desired result is established.

\item When $A\in \mathcal{SK}_{3\times 3}^{+}:$ It hold that $a_{ij}\geq 0$
with $i<j$, and $a_{ij}+a_{ji}\leq 0$, for $i,j=1,2,3$. Consequently,
choosing any values $\lambda _{1},\lambda _{2}<1$ is sufficient to ensure
that \eqref{AcondEX3} is satisfied, and thus the assumption \ref{a3}. Hence, By invoking Theorem \ref{main_Theor4GBCs}, the desired result is established.
\end{itemize}
This completes the proof.
\end{proof}
\begin{remark}
    If the diffusion coefficients differ from each other, the question of whether system \eqref{3SHOIs_LV} possesses global classical solutions(with or without uniform-in-time bounds) or undergoes blow-up remains open, in the following cases:
    \begin{itemize}
    \item When $\gamma_{i} = 1$ for $i=1,2,3$, and $A \in \mathcal{SK}_{3\times 3} \backslash \left( \mathcal{SK}_{3\times 3}^{-} \cup \mathcal{SK}_{3\times 3}^{+} \right)$, under the boundary condition \eqref{NHRBCs_EX3} or \eqref{MixNHRBCsHDBCs_EX3}.
    \item When $\gamma_{i} > 1$ for $i=1,2,3$, and $A \in \mathcal{SK}_{3\times 3} \backslash \left( \mathcal{SK}_{3\times 3}^{-} \cup \mathcal{SK}_{3\times 3}^{+} \right)$, under one of the boundary conditions \eqref{HNBCs_EX3}–\eqref{MixNHRBCsHDBCs_EX3}.
    \item When $\gamma_{i} \geq 1$ for $i=1,2,3$, and $A \in \mathcal{SK}_{3\times 3}$, under nonlinear boundary conditions.
    \end{itemize}
\end{remark}
\newpage


\begin{thebibliography}{99}

\bibitem{Abdelmalek2007} S. Abdelmalek, S. Kouachi, Proof of existence of global solutions for $m$-component reaction--diffusion systems with mixed boundary conditions via the Lyapunov functional method, J. Phys. A Math. Theor. 40 (2007) 12335. https://doi.org/10.1088/1751-8113/40/41/005.

\bibitem{Abdelmalek2013} S. Abdelmalek, M. Kirane, A. Youkana, A Lyapunov functional for a triangular reaction--diffusion system with nonlinearities of exponential growth, Math. Meth. Appl. Sci. 36 (2013) 80--85. https://doi.org/10.1002/mma.2572

\bibitem{AlAdwani2019} M. AlAdwani, S. Saavedra, Is the addition of higher-order interactions in ecological models increasing the understanding of ecological dynamics? Math. Biosci. 315 (2019) 108222. https://doi.org/10.1016/j.mbs.2019.108222

\bibitem{AlAdwani2020} M. AlAdwani, S. Saavedra, Ecological models: higher complexity in, higher feasibility out, J. R. Soc. Interface 17 (172) (2020) 20200607. https://doi.org/10.1098/rsif.2020.0607

\bibitem{Alikakos1979} N.D. Alikakos, $L^{p}$-bounds of solutions of reaction--diffusion equations, Commun. Partial Differ. Equ. 4 (1979) 827--868. https://doi.org/10.1080/03605307908820113

\bibitem{Amann1985} H.~Amann. Global existence for semilinear parabolic systems. Journal f\"ur die reine und angewandte Mathematik, 360 (1985), 47--83. https://doi.org/10.1515/crll.1985.360.47

\bibitem{Caputo2009} M.C. Caputo, A. Vasseur, Global regularity of solutions to systems of reaction--diffusion with sub-quadratic growth in any dimension, Commun. Partial Differ. Equ. 34 (2009) 1228--1250. https://doi.org/10.1080/03605300903089867

\bibitem{CaputoGoudon2019} M.C. Caputo, T. Goudon, A.F. Vasseur, Solutions of the 4-species quadratic reaction--diffusion system are bounded and $C^{\infty}$-smooth, in any space dimension, Anal. PDE 12 (2019) 1773--1804. https://doi.org/10.2140/apde.2019.12.1773

\bibitem{Canizo2014} J.A. Ca\~nizo, L. Desvillettes, K. Fellner, Improved duality estimates and applications to reaction--diffusion equations, Commun. Partial Differ. Equ. 39 (2014) 1185--1204. https://doi.org/10.1080/03605302.2013.829500

\bibitem{Desvillettes2014} L. Desvillettes, T. Lepoutre, A. Moussa, Entropy, duality, and cross diffusion, SIAM J. Math. Anal. 46 (2014) 820--853. https://doi.org/10.1137/130908701

\bibitem{Desvillettes2015} L. Desvillettes, T. Lepoutre, A. Moussa, A. Trescases, On the entropic structure of reaction--cross diffusion systems, Commun. Partial Differ. Equ. 40 (2015) 1705--1747. https://doi.org/10.1080/03605302.2014.998837

\bibitem{FellnerMorganTang2020} K. Fellner, J. Morgan, B.Q. Tang, Global classical solutions to quadratic systems with mass control in arbitrary dimensions, Ann. Inst. H. Poincar\'e C Anal. Non Lin\'eaire 37 (2020) 281--307. https://doi.org/10.1016/j.anihpc.2019.09.003

\bibitem{FellnerMorganTang2021} K. Fellner, J. Morgan, B.Q. Tang, Uniform-in-time bounds for quadratic reaction--diffusion systems with mass dissipation in higher dimensions, Discrete Contin. Dyn. Syst. Ser. S 14 (2021) 635--651. https://doi.org/10.3934/dcdss.2020334

\bibitem{Goudon2010} T. Goudon, A. Vasseur, Regularity analysis for systems of reaction--diffusion equations, Ann. Sci. \'Ec. Norm. Sup\'er. 43 (2010) 117--142. https://doi.org/10.24033/asens.2117

\bibitem{Kirane1983} A. Haraux, M. Kirane, Estimations $C^{1}$ pour des problèmes paraboliques semi-linéaires, Ann. Fac. Sci. Toulouse Math. 5 (1983) 265--280.

\bibitem{Youkana1988} A. Haraux, A. Youkana, On a result of K. Masuda concerning reaction--diffusion equations, Tohoku Math. J. 40 (1988) 159--163. https://doi.org/10.2748/tmj/1178228084

\bibitem{Henry1981} D. Henry, Geometric Theory of Semilinear Parabolic Equations, Lecture Notes in Mathematics, vol. 840, Springer-Verlag, 1981.

\bibitem{Hollis1987} S.L. Hollis, R.H. Martin, M. Pierre, Global existence and boundedness in reaction--diffusion systems, SIAM J. Math. Anal. 18 (1987) 744--761. https://doi.org/10.1137/0518057

\bibitem{Hollis1993} S.L. Hollis, On the question of global existence for reaction--diffusion systems with mixed boundary conditions, Q. Appl. Math. 51 (1993) 241--250.

\bibitem{Kirane1989} M. Kirane, Global bounds and asymptotics for a system of reaction--diffusion equations, J. Math. Anal. Appl. 138 (1989) 328--342. https://doi.org/10.1016/0022-247X(89)90293-X

\bibitem{Kouachi2001} S. Kouachi, Existence of global solutions to reaction--diffusion systems via a Lyapunov functional, Electron. J. Differ. Equ. 68 (2001) 1--10.

\bibitem{Kouachi2002} S. Kouachi, Existence of global solutions to reaction--diffusion systems with nonhomogeneous boundary conditions via a Lyapunov functional, Electron. J. Differ. Equ. 88 (2002) 1--13.

\bibitem{Ladyzenskaja1968} O. Lady$\breve{\text{z}}$enskaja, V. Solonnikov, N. Ural’ceva, Linear and Quasi-Linear Equations of Parabolic Type, Amer. Math. Soc., 1968. 

\bibitem{Masuda1983} K. Masuda, On the global existence and asymptotic behavior of solutions of reaction--diffusion equations, Hokkaido Math. J. 12 (1983) 360--370. https://doi.org/10.14492/hokmj/1470081012

\bibitem{Morgan1989} J. Morgan, Global existence for semilinear parabolic systems, SIAM J. Math. Anal. 20 (1989) 1128--1144. https://doi.org/10.1137/0520075

\bibitem{MorganTang2020} J. Morgan, B.Q. Tang, Boundedness for reaction--diffusion systems with Lyapunov functions and intermediate sum conditions, Nonlinearity 33 (2020) 3105. https://doi.org/10.1088/1361-6544/ab8772

\bibitem{Nittka2014} R. Nittka, Inhomogeneous parabolic Neumann problems, Czechoslovak Math. J. 64 (2014) 703--742. https://doi.org/10.1007/s10587-014-0127-4

\bibitem{Pierre2000} M. Pierre, D. Schmitt, Blowup in reaction--diffusion systems with dissipation of mass, SIAM Rev. 42 (2000) 93--106. https://doi.org/10.1137/S0036144599359735

\bibitem{Pierre2010} M. Pierre, Global existence in reaction--diffusion systems with control of mass: a survey, Milan J. Math. 78 (2010) 417--455. https://doi.org/10.1007/s00032-010-0133-4

\bibitem{PierreSuzuki2019} M. Pierre, T. Suzuki, Y. Yamada, Dissipative reaction--diffusion systems with quadratic growth, Indiana Univ. Math. J. 68 (2019) 291--322.

\bibitem{Quittner2019} P. Quittner, P. Souplet, Superlinear parabolic problems, Springer International Publishing, 2019.

\bibitem{Rothe1984} F. Rothe, Global Solutions of Reaction-Diffusion Systems, Lecture Notes in Mathematics 1072, Springer-Verlag, Berlin-New York, 1984.

\bibitem{Souplet2018} P. Souplet, Global existence for reaction--diffusion systems with dissipation of mass and quadratic growth, J. Evol. Equ. 18 (2018) 1713--1720. https://doi.org/10.1007/s00028-018-0458-y

\bibitem{Suzuki2013} T. Suzuki, Y. Yamada, A Lotka--Volterra system with diffusion, in: Nonlinear Analysis in Interdisciplinary Sciences--Modellings, Theory and Simulations, GAKUTO Internat. Ser. Math. Sci. Appl. 36 (2013) 215--236.

\bibitem{Suzuki2015} T. Suzuki, Y. Yamada, Global-in-time behavior of Lotka--Volterra system with diffusion: skew-symmetric case, Indiana Univ. Math. J. 64 (2015) 181--216.

\bibitem{Vandermeer1969} J.H. Vandermeer, The competitive structure of communities: an experimental approach with protozoa, Ecol. 50 (1969) 362--371. https://doi.org/10.2307/1933884

\end{thebibliography}
\end{document}